\newtheorem{thm}{Theorem}[section]
\newtheorem{prop}[thm]{Proposition}
\newtheorem{lem}[thm]{Lemma}
\newtheorem{corro}[thm]{Corollary}
\newtheorem{rem}[thm]{Remark}
\newtheorem{ass}{Assumption}
\newenvironment{prooff}[1]{\begin{trivlist}
\item {\it \bf Proof}\quad} {\qed\end{trivlist}}
\def\R{\mathbb R}
\def\N{\mathbb N}
\def\E{\mathbb E}
\def\Q{\mathbb Q}
\def\sha{{\cal A}}
\def\shc{{\cal C}}
\def\shf{{\cal F}}
\def\shj{{\cal J}}
\def\shs{{\cal S}}
\author{
{\sc Lucas IZYDORCZYK}
\thanks{ENSTA Paris, Institut Polytechnique de Paris.
Unit\'e de Math\'ematiques Appliqu\'ees (UMA).
 E-mail:{ \tt lucas.izydorczyk@ensta-paris.fr}} 
{\sc,}\ {\sc Nadia OUDJANE}
\thanks{EDF R\&D,   and FiME (Laboratoire de Finance des March\'es de l'Energie
(Dauphine, CREST,  EDF R\&D) www.fime-lab.org). 
E-mail:{\tt  
nadia.oudjane@edf.fr}}
\ {\sc and}\ {\sc Francesco RUSSO} 
\thanks{ENSTA Paris, Institut Polytechnique de Paris.
Unit\'e de Math\'ematiques Appliqu\'ees (UMA). 
 E-mail:{\tt  francesco.russo@ensta-paris.fr}.
 }}
\date{September 10th 2021}
\title{A fully backward representation of semilinear PDEs 
applied to the control of thermostatic
loads in power systems }
\newcommand{\MBFigure}[6]{
$\left. \right.$ \\
\refstepcounter{figure}
\addcontentsline{lof}{figure}{\numberline{\thefigure}{\ignorespaces #5}}
\begin{center}
\begin{minipage}{#1cm}
\centerline{\includegraphics[width=#2cm,angle=#3]{#4}}
\begin{center}
\upshape{F\textsc{ig} \normal
\end{center}
size{\thefigure}. $-$} #5
\end{center}
\label{#6}
\end{minipage}
\end{center}
$\left. \right.$ \\}
\begin{document}
\maketitle
 \begin{abstract}
We propose a fully backward representation
of semilinear PDEs with application to stochastic control.
Based on this, we  develop a fully backward Monte-Carlo  scheme allowing to
generate the regression grid, backwardly in time, as the value function is
computed. This offers two key advantages in terms of computational
efficiency and memory. First, the grid is generated adaptively in the areas of interest and second, there is no need to store the entire grid. 
The performances of this technique are compared in simulations 
to the traditional Monte-Carlo forward-backward approach
on a control problem of thermostatic loads.
 \end{abstract}
\medskip\noindent {\bf Key words and phrases:}  
Ornstein-Uhlenbeck processes;  probabilistic representation of
 PDEs;  time-reversal of diffusion; stochastic control; HJB equation;
regression Monte-Carlo scheme; demand-side management.

\medskip\noindent  {\bf 2020  AMS-classification}: 60H10; 
 60H30; 60J60; 65C05; 49L25;
35K58.
 
\section{Introduction}



The numerical resolution of non-linear PDEs is a crucial issue
in many applications. In particular, stochastic control problems
can be formulated by mean of the Hamilton-Jacobi-Bellman (HJB) equations
with terminal condition.
In this paper, we focus more particularly on control problems
raised by  demand-side management in power systems.
The difficulties come especially from the high dimensionality of the state space,
which motivates the use of probabilistic representations.
The main issue of numerical schemes is then to concentrate
the computing effort in specific regions of interest in the state space.
In classical regression Monte-Carlo approaches, the solution is evaluated
backwardly in time from the final time to the initial time, while
the regression grid is generated forwardly from
the initial time to the final one.
In this paper, we propose a fully backward probabilistic approach
which allows to generate adaptively the regression grid, as the solution is evaluated, taking advantage of the calculations already performed. Besides, there is no need to store the entire grid, since the points are generated as they are used for calculations. 
Our grid will be indeed simulated according to the time-reversal
of some diffusion starting from a judicious terminal distribution.

\noindent
We are interested in semilinear PDEs of the type
\begin{equation}
\label{eq:PDE_Intro}
\left\{
\begin{array}{lll}
\partial_t v (t,x)+H(t,x, v(t,x),\nabla_x v(t,x)) +\frac{1}{2} Tr[\sigma \sigma^\top (t) \nabla^2_x v(t,x)]=0, \ \left(t,x\right) \in [0,T[\times\R^d\\ 
v(T,x)=g(x),
\end{array}
\right .
\end{equation}
where in particular $\sigma$ is a deterministic non-degenerate
 matrix-valued function.
Under suitable conditions, there exists a unique
viscosity solution $v$ of \eqref{eq:PDE_Intro} in
the class of continuous functions with polynomial growth.
One classical probabilistic representation of $v$ is provided 
by Forward-Backward SDEs (FBSDEs), see e.g. \cite{pardouxgeilo}.
First a forward diffusion is fixed, with an arbitrary drift $\tilde b$ 
\begin{equation}\label{ForwE}
 dX_t = \widetilde{b}(t,X_t) dt + \sigma(t) dW_t.
\end{equation}
Then the solution of \eqref{eq:PDE_Intro} is represented by 
$v(s,x) = Y^{s,x}_s$,
where $ \left(Y,Z\right) = \left(Y^{s,x},Z^{s,x}\right)$
is the unique solution of the BSDE
\begin{equation} \label{BSDEIntro}
 Y_t = g(X_T) + \int_t^T F(r,X_r, Y_r,Z_r) dr - \int_t^T Z_r dW_r,
\end{equation}
with $X = X^{s,x}$ being the solution of \eqref{ForwE}
starting at time $s$ with value $x$ and 
$F$ being related to $H$ by  
\begin{equation} \label{Eq_F}
  F(t,x,y,z) := H(t,x,y,\left(\sigma^{-1}\left(t\right)\right)^\top z) - \left<\widetilde{b}(t,x),\left(\sigma^{-1}\left(t\right)\right)^\top z\right>.
  \end{equation}
Considering a time discretization mesh $t_{k}=k\delta t$, with $\delta t=\frac{T}{n}$ and $k=0,\cdots, n$, for a given positive integer $n$, \cite{labart}
 proved that one can approximate $(Y_{t_k}, Z_{t_k})$ by $(\hat Y_k, \hat Z_k)$ such that $\hat Y_n=g(X_T)$ and for $k=0,\cdots, n-1$ 
\begin{equation} \label{CondExp}
\left\{
\begin{array}{lll}
\hat Y_{k} &=& \displaystyle {\mathbb{E}\left (\sum_{\ell =k+1}^n
               F(t_\ell,X_{t_\ell},\hat Y_{\ell},\hat Z_{\ell-1})\delta t + g(X_T )\middle| X_{t_k}\right)
               }\\
\hat Z_{k} &=& \displaystyle {\frac{1}{\delta t}\mathbb{E}\left (\hat Y_{{k+1}}(W_{t_{k+1}}-W_{t_k}) \middle| X_{t_k}\right )}.
\end{array}
\right.
\end{equation}
Most of probabilistic numerical schemes (see e.g. regression Monte-Carlo~\cite{GobetWarin,bender12}, Kernel Monte-Carlo~\cite{BouchardTouzi}, Quantization~\cite{DelarueMenozzi})
rely on that representation. 
The common idea is then articulated in two steps. First,
one generates a grid discretizing the forward process~\eqref{ForwE} in space and time on $[0,T]$, (by Monte-Carlo simulations or Quantization, etc.). Then,
one calculates the conditional expectations~\eqref{CondExp} on the grid points in order to estimate $(\hat Y,\hat Z)$.
These techniques have generally two limitations.
\begin{enumerate}
\item The degree of freedom in the choice
  of the forward diffusion $X$ is difficult to exploit although it has a major impact on the numerical scheme efficiency: how to chose a reasonable drift $\tilde b$ without a priori information on $v$ ?
\item 
 The entire grid discretizing the forward process has to be stored in memory to be  revisited backwardly in time in order to compute the
solution process $(Y,Z)$. This approach naturally raises some huge memory issues 
which in general limit drastically the number of Monte-Carlo runs and time steps, hence 
the accuracy of the procedure.
\end{enumerate}
To overcome such limitations some approaches were proposed 
in the domain of mathematical finance, in particular for the evaluation of American style options. One technique, intended to deal with the memory problem, relies on bridge simulation,
see e.g. \cite{ribeiro03, sabino20}.
However, 
this approach requires specific developments for each price model (based for instance on the Brownian 
bridge for Brownian prices or on the gamma bridge for variance gamma prices) and remains difficult to
generalize to a wide class of models. To address the efficiency issue, \cite{bender07} developed a  scheme based on  Picard's type iterations that avoids the use of nested conditional expectations backwardly in time, which are replaced by nested conditional expectations along the iterations. 
In the same line, \cite{gobet10bis} proposes an adaptive variance reduction technique which combines Picard’s iterations and control variate to solve the BSDE. A parallel version of that algorithm was proposed in~\cite{labart13}. However, those approaches require, at each iteration, to approximate the solution on the whole time horizon. Similarly, importance sampling and Girsanov's theorem, were considered to force the exploration of the space towards areas of interest~\cite{bender10}. In particular, this type of approach was derived in the case of stochastic control in~\cite{exarchos} providing an iterative scheme that is capable of
learning the optimally controlled drift. Here again, that method requires several
estimations of the value function on the whole time horizon.
Besides~\cite{gobet17} proposed an adaptive importance sampling scheme for FBSDEs allowing to select the drift adaptively, as the calculations are performed backwardly. Unfortunately, that approach is limited to situations where the driver
$F$ does not depend on $Z$. In the present paper, we introduce a new adaptive
approach to address both the memory problem and the efficiency issue (related to the drift selection) in the general case where the driver may depend on $X,Y$ and $Z$. 

\noindent
We propose to choose
adaptively the drift $\tilde b$ 
 at the same time as we discover the function
$v$ such that
\begin{equation} \label{RepFormulaIntro}
\displaystyle v\left(t,X_t\right) = \mathbb{E}\left (\int^{T}_{t}
H\left(s,X_s,v\left(s,X_s\right),
\nabla_xv\left(s,X_s\right)\right) - \left<\widetilde{b}\left(s,X_s\right),\nabla_x v\left(s,X_s\right)\right>ds + g\left(X_T \right)\middle| X_t\right),
\end{equation}
by simulating the time-reversal
of a solution $X$ of \eqref{ForwE} starting from the distribution of $X_T$.
More specifically, to take advantage of the Ornstein-Uhlenbeck setting, we choose the drift $\tilde b$ to be affine w.r.t. the space variable. 
We  fix a Gaussian distribution $\nu$
and look for solutions $\xi$ of the McKean SDE
\begin{equation} \label{Rev-SDEIntro}
\begin{cases} 
\xi_0 \sim \nu, \\
\displaystyle \xi_t = \xi_0 - \int^{t}_{0}\widetilde{b}\left(T-s,{\xi}_s\right) + \sigma\sigma^\top\left(T-s\right)Q\left(T-s\right)^{-1}\left(\xi_s - m\left(T-s\right)\right)ds + \int^{t}_{0}\sigma\left(T-s\right)d\beta_s, \\
m(T- t) = \E\left(\xi_t\right), \\
Q(T- t) = {\rm Cov}\left(\xi_t\right)\quad \textrm{for}\  t \in ]0,T] .
\end{cases}
\end{equation}
By Proposition \ref{McKean}, \eqref{Rev-SDEIntro}
admits exactly one solution $\xi$, provided 
Assumption \ref{ass_nu} in Section \ref{R41}
is verified. That
 assumption depends on the covariance matrix of $\nu$, the drift $\tilde b$ and the volatility $\sigma$. Indeed, one important limitation is that the covariance matrix should be chosen carefully to ensure that the process is well-defined until $T$.   
Point 2. of Proposition \ref{McKean} and Lemma \ref{HP_lemma}
say that the time-reversal process $\hat{\xi}$, i.e. $\hat \xi_t:=\xi_{T-t}$, 
is an Ornstein-Uhlenbeck process solution 
of \eqref{ForwE} such that the law of $X_0$ is
Gaussian with mean $m(0)$ and covariance $Q(0)$. 
%
This leads to the first result of this paper which consists of the fully backward representation
 stated in Theorem~\ref{SLPDE-Rep}.
The proof is based on Feynman-Kac type formula instead of BSDEs and it does
not require explicitly the uniqueness of viscosity solution of the PDE \eqref{eq:PDE_Intro}.
The second contribution of the paper is Corollary \ref{RepFormulaControl}
which is the ``instantiation'' of  Theorem~\ref{SLPDE-Rep}
 in the framework of
stochastic control, i.e. the representation of its value function (solution of a  Hamilton-Jacobi-Bellman equation). This holds 
when the running and terminal cost have polynomial growth with respect to the state space variable.
We also suppose that the value function is of class $\shc^{0,1}$ whose gradient has polynomial growth.
In particular, we derive in Corollary \ref{HJB_corro}, a representation  
involving 
the gap between the optimally controlled drift and the instrumental drift $\tilde b$.
In Section \ref{S5}, we present a fully backward Monte-Carlo regression scheme,
where the instrumental drift  is adaptively updated in order to mimic the optimally 
controlled dynamics, see Algorithm \ref{algo}.
We expect that this approach is particularly well-suited
when the final cost has a strong impact on the global cost
and  when the terminal cost function
 is  localized in a small region of the space, so that the initial distribution $\nu$
 can be chosen in an appropriate way. 
Finally, in Section \ref{Sexample} we illustrate the interest of this new algorithm
applied to the problem of controlling the consumption of a large number of thermostatic loads in order to
minimize an aggregative cost. 
We compare our approach to the classical regression Monte-Carlo scheme based on a forward grid.

\section{Notations}

\label{SNotat}

\noindent Let us fix $T > 0$, $d,k \in \mathbb{N}^*$. For a given $p \in \N^*$, $[\![1,p]\!]$ denotes the set of all integers between $1$ and $p$ included. $\left<\cdot,\cdot\right>$ denotes the usual scalar product on $\R^d$ and $\left|\cdot\right|$ the associated norm. Elements of $\R^d$ are supposed to be column vectors. $M_d\left(\R\right)$ stands for the set of $d \times d$ matrices, $S_d\left(\R\right)$ for the subset of symmetric matrices, $S^+_d\left(\R\right)$ the subset of symmetric positive semi-definite
matrices (in particular  with non-negative eigenvalues)
 and $S^{++}_d\left(\R\right)$ for the 
subset of strictly positive definite
symmetric matrices.
For a given $A \in M_d\left(\R\right)$, $A^\top$ will denote its transpose, $Tr\left(A\right)$ its trace, $Sp\left(A\right)$ its spectrum,
i.e. the set of its eigenvalues,
$e^A := \sum^{\infty}_{k=0}\frac{A^k}{k!}$ its exponential and $\left|\left|A\right|\right| := \sup_{x\in\R^d, \left|x\right| = 1} \left|Ax\right|$. For a given $A \in S^{+}_d\left(\R\right)$, $\sqrt{A}$ denotes the unique element of $S^{+}_d\left(\R\right)$ such that $(\sqrt{A})^2 = A$.

\smallbreak
\noindent
For a given continuous function $f : [0,T] \mapsto \R^d$ (resp. $g : [0,T] \mapsto M_d\left(\R\right)$), we set $\left|\left| f\right|\right|_{\infty} := \sup_{t \in [0,T]} \left| f\left(t\right)\right|$ (resp. $\left|\left| g \right|\right|_{\infty} := \sup_{t \in [0,T]} \left|\left| g\left(t\right)\right|\right|$). 
 $\shc^{1,2}\left([0,T],\R^d\right)$ (resp. $\shc^{0,1}\left([0,T],\R^d\right)$) denotes the set of real-valued functions defined on  $[0,T]\times\R^d$
 being continuously differentiable in time and twice continuously differentiable in space (resp. continuous in time and continuously differentiable in space). $\shc^0\left([0,T]\times\R^d\right)$ (resp $\shc^1\left(\R^d\right)$) denotes the set of continuous (resp  continuously differentiable) real-valued functions defined on  $[0,T]\times\R^d$ (resp. $\R^d$). $\nabla_x$ will denote the gradient operator and $\nabla^2_x$ the Hessian matrix. For each $p \in \N$, $P_p\left(\R^d\right)$ denotes the set of polynomial functions on $\R^d$ with degree $p$. 
\smallbreak
\noindent In the whole paper, we say that a function $v : [0,T]\times\R^d \mapsto \R$ has \textit{polynomial growth} if there exists $q,K > 0$ such that for all $\left(t,x\right) \in [0,T]\times\R^d$
\begin{equation*}
\left|v\left(t,x\right)\right| \leq K\left(1 + \left|x\right|^q\right).
\end{equation*}
\smallbreak
\noindent When $v$ verifies previous property with $q = 1$, we say that it has \textit{linear growth}.
\smallbreak
\noindent For a given random vector $X$ defined on a probability space
$\left(\Omega,\mathcal{F},\mathbb{P}\right)$,
$\mathbb{E}_{\mathbb{P}}\left(X\right)$ (resp. $\mathrm{Cov}_{\mathbb{P}}\left(X\right) := \mathbb{E}_{\mathbb{P}}\left(\left(X - \mathbb{E}_{\mathbb{P}}\left(X\right)\right)\left(X - \mathbb{E}_\mathbb{P}\left(X\right)\right)^\top\right)$)
will denote
its expectation (resp. its covariance matrix) under $\mathbb{P}$. 
 When self-explanatory, the subscript will be omitted in the sequel. For a given $\left(m,Q\right) \in \R^d\times S^{+}_d\left(\R\right)$, $\mathcal{N}\left(m,Q\right)$ denotes the Gaussian probability on $\R^d$ with mean $m$ and covariance matrix $Q$.
\smallbreak
\noindent For any stochastic process $X$, $\mathcal{F}^{X}$ will denote its canonical filtration. $\widehat{X}$ will denote the time-reversal process
$X_{T-\cdot}$.

\section{Representation of semilinear PDEs} \label{RepSection}

\setcounter{equation}{0}
\subsection{Around two backward ODEs} 

\label{R41}
\noindent Let $a$ (resp. $c$) be Borel bounded functions
from $[0,T]$ to $M_d\left(\R\right)$ (resp. $\R^d$).
\smallbreak
\noindent
In the sequel we will fix 
a Gaussian Borel probability $\nu$ on $\R^d$ 
 with mean $\bar{m}^\nu$ and covariance matrix $\bar{Q}^\nu$.
We consider the functions $m^\nu : [0,T] \mapsto \R^d$ and $Q^\nu : [0,T] \mapsto S_d\left(\R\right)$ denoting respectively the unique solutions of the backward ODEs
\begin{equation}
\label{ODE_m}
\begin{cases}
\frac{d}{dt}m^\nu\left(t\right) = a\left(t\right)m^\nu\left(t\right) + c\left(t\right), \quad  t \in [0,T]\\
m^\nu\left(T\right) = \bar{m}^\nu,
\end{cases}
\end{equation}
\begin{equation}
\label{ODE_Q}
\begin{cases}
\frac{d}{dt}Q^\nu\left(t\right) = Q^\nu\left(t\right)a\left(t\right)^\top + a\left(t\right)Q^\nu\left(t\right) + \Sigma\left(t\right), t \in [0,T] \\
Q^\nu\left(T\right) = \bar{Q}^\nu,
\end{cases}
\end{equation}
\noindent for which existence and uniqueness hold since they are linear.
\smallbreak
\noindent We introduce an hypothesis on $\nu$ which will be used in
the sequel.
\begin{ass} \label{ass_nu}
$Q^\nu(0) \in \shs^+_d\left(\R\right)$.
\end{ass}
\noindent Easy computations imply for all $t \in [0,T]$
\begin{equation} \label{m_explicit}
m^\nu\left(t\right) = \sha\left(t\right)\left(\sha\left(T\right)^{-1}\bar{m}^\nu - \int^{T}_{t}\sha\left(s\right)^{-1}c\left(s\right)ds\right),
\end{equation}
\begin{equation} \label{Q_explicit}
Q^\nu\left(t\right) = \sha\left(t\right)\left(\sha\left(T\right)^{-1}\bar{Q}^\nu\left(\sha\left(T\right)^{-1}\right)^\top - \int^{T}_{t}\sha\left(s\right)^{-1}\Sigma\left(s\right)
\left(\sha\left(s\right)^{-1}\right)^\top ds \right)\sha\left(t\right)^\top,
\end{equation}
\noindent where $\sha\left(t\right),t\in[0,T]$ is the unique solution of the matrix ODE
\begin{equation}
\label{ODE_funda}
\begin{cases}
\frac{d}{dt}\sha\left(t\right) = a\left(t\right)\sha\left(t\right), t \in [0,T]\\
\sha\left(0\right) = I_d.
\end{cases}
\end{equation}
\noindent We recall that for all $t \in [0,T]$, $\sha\left(t\right)$ is invertible and the matrix valued function $t \mapsto \sha(t)^{-1}$ solves the ODE
\begin{equation}
\label{ODE_funda_inv}
\begin{cases}
\frac{d}{dt}\sha\left(t\right)^{-1} = -\sha\left(t\right)^{-1}a\left(t\right), t \in [0,T]\\
\sha\left(0\right)^{-1} = I_d,
\end{cases}
\end{equation}
\noindent see Chapter 8 in \cite{bronson} for similar and further properties.
\smallbreak
\noindent Note that in the case $a\left(t\right) = a, \  t \in [0,T]$ for a given $a \in M_d\left(\R\right)$, then  $\sha : t \rightarrow e^{at}$ and identities \eqref{m_explicit}, \eqref{Q_explicit} simplify as follows:
\begin{equation} \label{m_explicit_simple}
m^\nu\left(t\right) = e^{-a\left(T-t\right)}\bar{m}^\nu - \int^{T}_{t}e^{-a\left(s-t\right)}c\left(s\right)ds,
\end{equation}
\begin{equation} \label{Q_explicit_simple}
Q^\nu\left(t\right) = e^{-a\left(T-t\right)} \bar{Q}^\nu e^{-a^\top\left(T-t\right)} - \int^{T}_{t}e^{-a\left(s-t\right)}\Sigma\left(s\right)e^{-a^\top\left(s-t\right)}ds,
\end{equation}
\noindent for all $t\in [0,T]$.

\begin{rem}\label{rem_nondegen}
\noindent Suppose  that $Q^\nu\left(0\right)$ belongs to $S^+_d\left(\R\right)$. Identity \eqref{Q_explicit} gives in particular 
\begin{equation} \label{Q_explicit_fwd}
Q^\nu\left(t\right) = \sha\left(t\right)\left(Q^\nu\left(0\right) + \int^{t}_{0}\sha\left(s\right)^{-1}\Sigma\left(s\right)
\left(\sha\left(s\right)^{-1}\right)^\top ds \right)\sha\left(t\right)^\top, \ t \in [0,T].
\end{equation}
\noindent Combining \eqref{Q_explicit_fwd} and the fact $\sigma\left(t\right)$ is invertible for all $t \in [0,T]$, we remark that $Q^\nu\left(t\right)$ belongs to $S^{++}_d\left(\R\right)$ for all $t \in ]0,T]$.
\end{rem}
\noindent Finally we give a condition depending on $\sha, \sigma, \bar{Q}^\nu$ and $T$ to ensure the measure $\nu$ fulfills Assumption \ref{ass_nu}.
\begin{prop} \label{Q_pos_cond}
  \noindent Suppose that 
\begin{equation} \label{cond_pos}
\min Sp\left(\bar{Q}^\nu\right) \geq \int^{T}_{0}\left|\left|\sigma\left(s\right)\right|\right|^2\left|\left|\left(\sha\left(T\right)\sha\left(s\right)^{-1}\right)^\top\right|\right|^2ds.
\end{equation}
\noindent Then,
\begin{equation} \label{P42Concl}
  Q^\nu\left(0\right) \in S^+_d\left(\R\right).
  \end{equation}
\end{prop}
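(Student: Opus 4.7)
The plan is to use the explicit representation \eqref{Q_explicit} at $t=0$, where $\mathcal{A}(0)=I_d$, yielding
\begin{equation*}
Q^\nu(0) \;=\; \mathcal{A}(T)^{-1}\bar{Q}^\nu\bigl(\mathcal{A}(T)^{-1}\bigr)^\top \;-\; \int_0^T \mathcal{A}(s)^{-1}\Sigma(s)\bigl(\mathcal{A}(s)^{-1}\bigr)^\top ds,
\end{equation*}
and then to show that this symmetric matrix is positive semi-definite by testing against an arbitrary vector $x\in\R^d$. Writing $\Sigma(s)=\sigma(s)\sigma(s)^\top$, the integrand can be recast as a perfect square $\bigl|\sigma(s)^\top(\mathcal{A}(s)^{-1})^\top x\bigr|^2$, so the quadratic form $x^\top Q^\nu(0) x$ splits cleanly into a positive boundary contribution minus a non-negative integral.

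The key bookkeeping step is the change of variable $y := (\mathcal{A}(T)^{-1})^\top x$, equivalently $x=\mathcal{A}(T)^\top y$. Under this substitution, the boundary term becomes $y^\top \bar{Q}^\nu y$, which is bounded below by $\min Sp(\bar{Q}^\nu)\,|y|^2$ since $\bar Q^\nu\in S^+_d(\R)$ (being the covariance of the Gaussian $\nu$). On the other hand, using $(\mathcal{A}(s)^{-1})^\top \mathcal{A}(T)^\top = (\mathcal{A}(T)\mathcal{A}(s)^{-1})^\top$, the integrand rewrites as $\bigl|\sigma(s)^\top(\mathcal{A}(T)\mathcal{A}(s)^{-1})^\top y\bigr|^2$ and admits the upper bound $\|\sigma(s)\|^2\,\|(\mathcal{A}(T)\mathcal{A}(s)^{-1})^\top\|^2\,|y|^2$ by submultiplicativity of the operator norm (noting $\|\sigma(s)^\top\|=\|\sigma(s)\|$).

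Combining the two bounds gives
\begin{equation*}
x^\top Q^\nu(0)\, x \;\geq\; |y|^2\left(\min Sp(\bar{Q}^\nu) - \int_0^T\|\sigma(s)\|^2\,\bigl\|(\mathcal{A}(T)\mathcal{A}(s)^{-1})^\top\bigr\|^2 ds\right),
\end{equation*}
and the hypothesis \eqref{cond_pos} forces the parenthesis to be non-negative. Since $x$ was arbitrary, the conclusion \eqref{P42Concl} follows; the symmetry of $Q^\nu(0)$ is transparent from the symmetry of $\bar{Q}^\nu$ and $\Sigma(s)$ in the explicit formula above.

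The proof is essentially a routine manipulation and I do not anticipate a serious obstacle; the only subtlety is making sure the adjoint is carried on the correct factor so that the norm appearing in the estimate matches precisely the quantity $\|(\mathcal{A}(T)\mathcal{A}(s)^{-1})^\top\|$ used in the statement, which is why the substitution $y=(\mathcal{A}(T)^{-1})^\top x$ (rather than, say, $y=\mathcal{A}(T)^{-1}x$) is the natural one to use.
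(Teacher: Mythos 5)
Your proof is correct and takes essentially the same route as the paper: your substitution $y=(\sha(T)^{-1})^\top x$ is exactly the paper's reduction of \eqref{P42Concl} to the positive semi-definiteness of $\sha(T)Q^\nu(0)\sha(T)^\top = \bar{Q}^\nu - \int_0^T \sha(T)\sha(s)^{-1}\Sigma(s)\left(\sha(T)\sha(s)^{-1}\right)^\top ds$, after which both arguments use the identical eigenvalue lower bound and operator-norm upper bound to invoke \eqref{cond_pos}.
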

\begin{proof}
  \noindent
 Since $\sha\left(T\right)$ is invertible and
  ${Q}^\nu\left(0\right)$
  belongs to $S_d\left(\R\right)$, 
  \eqref{P42Concl}  is equivalent to

\begin{equation}\label{EP42}
 \sha\left(T\right)Q^\nu\left(0\right)\sha\left(T\right)^\top
  \in
  S^+_d\left(\R\right).
\end{equation}
 To prove \eqref{EP42}, 
taking into account
\eqref{Q_explicit},
it suffices to show that the matrix 
\begin{equation*}
\bar{Q}^\nu - \int^{T}_{0}\sha\left(T\right)\sha\left(s\right)^{-1}\Sigma\left(s\right)
\left(\sha\left(T\right)\sha\left(s\right)^{-1}\right)^\top ds \in S^+_d\left(\R\right),
\end{equation*}
\noindent or, equivalently, that for all $x \in \R^d$
\begin{equation}\label{TxQx}
\lambda := x^\top \bar{Q}^\nu x - \int^{T}_{0}x^\top\sha\left(T\right)\sha\left(s\right)^{-1}\Sigma\left(s\right)
\left(\sha\left(T\right)\sha\left(s\right)^{-1}\right)^\top x ds \geq 0.
\end{equation}
\noindent
Let $x \in \R^d$, 
\begin{align*}
\lambda &{} \geq \min Sp\left(\bar{Q}^\nu\right)\left|x\right|^2 - \int^{T}_{0}\left|\sigma\left(s\right)^\top
\left(\sha\left(T\right)\sha\left(s\right)^{-1}\right)^\top x\right|^2ds, \\
&{} \geq \left(\min Sp\left(\bar{Q}^\nu\right) - \int^{T}_{0}\left|\left|\sigma\left(s\right)^\top\right|\right|^2\left|\left|
\left(\sha\left(T\right)\sha\left(s\right)^{-1}\right)^\top\right|\right|^2ds\right)\left|x\right|^2, \\
&{} \geq 0, 
\end{align*}
\noindent since \eqref{cond_pos} holds. This ends the proof.
\end{proof}
\begin{rem}
\smallbreak
\noindent In the case $a\left(t\right) = a, \  t \in [0,T]$ for a given $a \in M_d\left(\R\right)$, Condition \eqref{cond_pos} is satisfied in particular if 
\begin{equation} \label{cond_pos_simple}
\min Sp\left(\bar{Q}^\nu\right) \geq  \left|\left|\sigma^\top\right|\right|^2_{\infty}\int^{T}_{0}\left|\left|e^{a^\top s}\right|\right|^2ds
\end{equation}
is verified.
\end{rem}


\begin{rem} \label{RmCov}
Let $X$ be a solution of 
\begin{equation} \label{EOUIntro}
  X_t = X_0 + \int^{t}_{0} \widetilde{b}\left(s,X_s\right) ds
  + \int^{t}_{0}\sigma\left(s\right)dW_s, \ t \in [0,T[, 
\end{equation}
where $\sigma$ is a deterministic matrix-valued function and
$\widetilde{b}$ the piecewise affine function
$$ \widetilde{b} (t,x) = a(t) x + c(t), \ t \in [0,T],$$
 and $X_0$ be a square integrable r.v.
It is well-known that $X$ is a square integrable process.
 Let, for every $t \in [0,T]$,
 $m(t)=\mathbb{E}\left(X_t\right)$ and $Q(t)$ the covariance matrix of $X_t$. 
Setting $\bar m^\nu= \mathbb{E}\left(X_T\right)$ and $\bar Q^\nu$ the covariance matrix
of $X_T$. Then 
\begin{equation} \label{EEnu}
m = m^\nu, \quad Q= Q^\nu.
\end{equation}
Indeed, by Problem 6.1 in Chapter 5 in \cite{karatshreve}
  $m$ (resp. $Q$) 
is solution of  \eqref{ODE_m} (resp. \eqref{ODE_Q}).
\eqref{EEnu} follows by uniqueness of previous ODEs.
\end{rem}

\subsection{The representation formula for a general semilinear PDE}

\label{S42}
\noindent In the whole paper $\sigma$ will be a continuous function defined on 
$[0,T]$ with values in $M_d\left(\R\right)$ such that for all $t \in [0,T]$, $\sigma\left(t\right)$ is invertible. We will set $\Sigma := \sigma \sigma^\top$.
\smallbreak
\noindent Let $\widetilde{b} : [0,T]\times\R^d \mapsto \R^d$ and $b_c : [0,T]\times\R^d \times \R^d \times S^{++}_d\left(\R\right) \mapsto \R^d$ defined by
\begin{equation} \label{E42}
b_c : \left(t,x,m,Q\right) \mapsto \Sigma(t) Q^{-1}\left(x-m\right), \ \widetilde{b} : \left(t,x\right) \mapsto a\left(t\right)x + c\left(t\right),
\end{equation}
where $a, c$ were defined at Section \ref{R41}.  Let  $H: [0,T]\times\R^d\times\R\times\R^d \rightarrow \R$
and $g: \R^d \rightarrow \R$. The goal of this subsection is to provide a probabilistic representation of
 viscosity solutions, being continuous in time and continuously differentiable in space, of the semilinear PDE

\begin{equation}
\label{SLPDE}
\left\{
\begin{array}{lll}
\partial_t v\left(t,x\right) + 
\frac{1}{2}Tr\left(\Sigma\left(t\right)\nabla^2_x v\left(t,x\right)\right)
+ H \left(t,x,v\left(t,x\right),\nabla_xv\left(t,x\right)\right) = 0,\ \left(t,x\right) \in [0,T[\times\R^d \\
v\left(T,\cdot\right) = g.
\end{array}
\right.
\end{equation}
\noindent 
\smallbreak
\noindent 
To formulate the result we consider the following assumption.
\begin{ass}\label{ass_g}
$g$ is continuous and has polynomial growth. 
\end{ass}

\noindent Let $\nu$ be a Gaussian Borel probability on $\R^d$ 
with mean ${\bar m}^\nu$  and covariance ${\bar Q}^\nu$. 
Let $t \mapsto m^\nu(t)$ defined in
\eqref{m_explicit},
$t \mapsto Q^\nu(t)$ be given by \eqref{Q_explicit}
and suppose that $\nu$ fulfills Assumption \ref{ass_nu}.
\smallbreak
\noindent We fix a filtered probability space $\left(\Omega, \mathcal{F},\left(\mathcal{F}_t\right)_{t\in[0,T]},\mathbb{P}\right)$ on which are defined a $d$-dimensional Brownian motion $\beta$ and a random vector $\xi_0$ distributed according to $\nu$ and independent of $\beta$. 
\smallbreak


\noindent Let $\xi$ be the unique strong solution of 
\begin{equation} \label{Rev-SDE}
 \displaystyle \xi_t = \xi_0 - \int^{t}_{0}\widetilde{b}\left(T-s,\xi_s\right) + b_c\left(T-s,\xi_s,m^\nu\left(T-s\right),Q^\nu\left(T-s\right)\right)ds + \int^{t}_{0}\sigma\left(T-s\right)d\beta_s, \ t \in [0,T[.
\end{equation}
\begin{rem}\label{StrongEx}
\noindent  \eqref{Rev-SDE} admits a unique strong solution on $[0,T[$ since 
its drift is affine with time-dependent continuous coefficients.
\end{rem}
\begin{lem} \label{HP_lemma}
\begin{enumerate}
\item The process $\widehat{\xi} := \xi_{T-\cdot}$ solves the SDE
\begin{equation}\label{direct_OU}
X_t = X_0 + \int^{t}_{0}\widetilde{b}\left(s,X_s\right)ds + \int^{t}_{0}\sigma\left(s\right)dW_s, \ t \in [0,T[, 
\end{equation}
where $W$ is an $\mathcal{F}^{\widehat \xi}$-Brownian motion independent of
$X_0 \sim \mathcal{N}\left(m^\nu\left(0\right),Q^\nu\left(0\right)\right)$.
\item $\widehat{\xi}$ extends continuously to $[0,T]$. 
\end{enumerate}
\end{lem}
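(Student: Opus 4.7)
The plan is to first identify the Gaussian law of $\xi_t$ explicitly, then apply the Haussmann--Pardoux time-reversal formula to deduce the SDE satisfied by $\widehat\xi$, and finally extend $\widehat\xi$ continuously to $t=0$.

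Since the drift of \eqref{Rev-SDE} is affine in $\xi_s$ with continuous time-dependent coefficients and $\xi_0\sim\nu$ is Gaussian, the process $\xi$ is Gaussian on $[0,T[$. Taking expectations and computing the covariance of $\xi_s$ from \eqref{Rev-SDE} yields linear ODEs for $s\mapsto\mathbb{E}(\xi_s)$ and $s\mapsto\mathrm{Cov}(\xi_s)$; after the change of variable $t=T-s$ these coincide with the backward ODEs \eqref{ODE_m}--\eqref{ODE_Q} (compare with Remark \ref{RmCov}), so $\mathbb{E}(\xi_s)=m^\nu(T-s)$ and $\mathrm{Cov}(\xi_s)=Q^\nu(T-s)$. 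By Remark \ref{rem_nondegen}, $Q^\nu(T-s)\in S_d^{++}(\R)$ for every $s\in[0,T[$, so $\xi_s$ admits a Gaussian density $p_s$ with $\nabla_x\log p_s(x)=-Q^\nu(T-s)^{-1}(x-m^\nu(T-s))$.

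For each $\varepsilon>0$ we apply the Haussmann--Pardoux time-reversal theorem to $\xi$ restricted to $[0,T-\varepsilon]$. This produces an $\mathcal{F}^{\widehat\xi}$-Brownian motion $W$ on $[\varepsilon,T]$ such that
\[
d\widehat\xi_t=\bigl[\widetilde b(t,\widehat\xi_t)+b_c(t,\widehat\xi_t,m^\nu(t),Q^\nu(t))+\Sigma(t)\nabla_x\log p_{T-t}(\widehat\xi_t)\bigr]dt+\sigma(t)dW_t.
\]
Substituting the explicit log-gradient above and the definition of $b_c$ in \eqref{E42}, the last two terms inside the bracket cancel identically, leaving exactly $\widetilde b(t,\widehat\xi_t)dt+\sigma(t)dW_t$. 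Since the Brownian motions obtained on nested intervals $[\varepsilon,T]$ paste consistently, letting $\varepsilon\downarrow 0$ yields \eqref{direct_OU} on $]0,T]$.

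For the continuous extension at $t=0$, the coefficients of \eqref{direct_OU} are continuous on the closed interval $[0,T]$, so the stochastic and Lebesgue integrals admit continuous limits at $t=0$; hence $X_0:=\lim_{t\downarrow 0}\widehat\xi_t$ exists $\mathbb{P}$-a.s., establishing Point 2. Continuity of $(m^\nu,Q^\nu)$ identifies the law of $X_0$ as $\mathcal{N}(m^\nu(0),Q^\nu(0))$, and the independence of $X_0$ and $W$ follows from $X_0\in\mathcal{F}^{\widehat\xi}_{0+}$ together with the independent-increment property of the Haussmann--Pardoux Brownian motion. The main delicate point is the potential singularity of the reversed drift in \eqref{Rev-SDE} through $Q^\nu(0)^{-1}$ when $Q^\nu(0)$ lies on the boundary of $S_d^{++}(\R)$: this is precisely why $\xi$ is only defined on $[0,T[$ and why one must apply the time-reversal formula on $[0,T-\varepsilon]$ first; however, the benign coefficients of the forward SDE \eqref{direct_OU} ensure that the passage to the limit at $t=0$ is straightforward once Step 2 is in hand.
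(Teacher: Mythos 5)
Your argument is correct in substance, but it takes a genuinely different route from the paper's, and two steps are stated more quickly than they can be justified. For item 1 the paper does not reverse $\xi$: it takes a solution $X$ of the forward equation \eqref{direct_OU}, identifies its marginals as $\mathcal{N}\left(m^\nu(t),Q^\nu(t)\right)$, applies Haussmann--Pardoux to $X$ to conclude that $\widehat{X}$ solves \eqref{Rev-SDE} in law, and transfers this to $\xi$ by pathwise uniqueness (hence uniqueness in law) of \eqref{Rev-SDE}; that $\widehat{\xi}$ itself is driven by an $\mathcal{F}^{\widehat{\xi}}$-Brownian motion is then obtained by writing the martingale part of $\widehat{\xi}$ as a stochastic integral against $\sigma^{-1}$ and invoking equality in law plus L\'evy's characterization. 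You instead reverse $\xi$ directly on $[0,T-\varepsilon]$ after computing its marginals and let the Gaussian score cancel $b_c$; this is more direct (it yields the $\mathcal{F}^{\widehat{\xi}}$-Brownian motion at once, without the law-transfer step), at the cost of checking the Haussmann--Pardoux hypotheses on each truncated interval and the consistency of the reversed Brownian motions, which here is automatic since $W_t-W_s=\int_s^t\sigma(u)^{-1}\left(d\widehat{\xi}_u-\widetilde{b}\left(u,\widehat{\xi}_u\right)du\right)$. Two caveats. First, your claim that the mean/covariance ODEs of $\xi$ ``coincide with \eqref{ODE_m}--\eqref{ODE_Q} after the change of variable'' is not literal: the precise statement is that $m^\nu(T-\cdot)$ and $Q^\nu(T-\cdot)$ satisfy the same linear ODEs, with the same initial data, as $\mathbb{E}(\xi_\cdot)$ and $\mathrm{Cov}(\xi_\cdot)$, because the $b_c$-contributions cancel (this is exactly the computation in the proof of Proposition \ref{McKean}); uniqueness of the linear ODEs then identifies them. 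Second, for item 2 the paper proves $\mathbb{E}\int_0^T\left|b_c\left(s,\widehat{\xi}_s,m^\nu(s),Q^\nu(s)\right)\right|ds<\infty$ through the Gaussian law of $\widehat{\xi}_s$ and a lower bound on $\left|\left|Q^\nu(t)\right|\right|/t$ near $0$, so that the possibly singular drift integral in \eqref{Rev-SDE} converges at $t=T$; you instead extend from the benign forward equation, but your one-sentence justification is too quick: \eqref{direct_OU} on $[\varepsilon,T]$ only controls increments after $\varepsilon$, and the drift integral involves $\widehat{\xi}$ itself near $0$, so continuity of the coefficients alone does not give the limit. To close it, use the a.s. extension at $0^+$ of the Wiener integral (backward $L^2$-martingale convergence) together with the variation-of-constants formula $\widehat{\xi}_\varepsilon=\sha(\varepsilon)\sha(t)^{-1}\widehat{\xi}_t-\sha(\varepsilon)\int_\varepsilon^t\sha(s)^{-1}c(s)ds-\sha(\varepsilon)\int_\varepsilon^t\sha(s)^{-1}\sigma(s)dW_s$, or a Gronwall bound on $\sup_{s\in(0,t]}|\widehat{\xi}_s|$. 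With these standard additions your proof is complete; its benefit is that it obtains item 2 without the paper's quantitative analysis of $Q^\nu$ near $0$, while the paper's estimate is self-contained and additionally gives integrability of the singular drift term.
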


\begin{proof}
  \begin{description}
  \item{i)}
    The SDE \eqref{direct_OU} admits in particular existence 
  in law. Let $X$ be a solution of  \eqref{direct_OU}.
To prove the first statement, we first show that the laws of
 $\widehat{\xi}$ and $X$ coincide.

 For this it is enough to prove that $\widehat{X} = X_{T-\cdot}$ and the solution $\xi$
 of \eqref{Rev-SDE} are identically distributed.
By Problem 6.1 in Chapter 5 in \cite{karatshreve}
and by uniqueness of the ODE \eqref{ODE_m}
(resp. \eqref{ODE_Q})
with initial condition 
$m^\nu\left(0\right)$
  (resp. $Q^\nu\left(0\right)$),
we get
$ \mathbb{E}\left(X_t\right) = m^\nu(t)$ and $\mathrm{Cov}\left(X_t\right) = Q^\nu(t)$ for all $t \in [0,T]$.
 By Problem 6.2, Chapter 5 in \cite{karatshreve})
 $X$  is a Gaussian process so
\begin{equation} \label{ENormXi}
  \widehat{\xi}_t \sim \mathcal{N}\left(m^\nu\left(t\right),Q^\nu\left(t\right)\right), \ t \in [0,T].
  \end{equation}
By  \eqref{ENormXi} and Theorem 2.1 
in \cite{haussmann_pardoux}, $\widehat{X}$ is a solution (in law) 
of \eqref{Rev-SDE}  on $ [0,T[$.
Pathwise uniqueness for \eqref{Rev-SDE} implies uniqueness in law on $[0,T[$ and the first statement of Lemma \ref{HP_lemma} is established.
  \item{ii)} We proceed now with the proof of the first statement.
    Let $X$ be a solution of \eqref{direct_OU}, so that
    we know that $W$ is a Brownian motion independent of $X_0$.
     On the other hand the process
    $$ M^X_t:= X_t - X_0 - \int_0^t \tilde b (u,X_u)du, \ t \in [0,T].$$
     is an $\shf^X$-martingale
    with quadratic variation
    $$ [M^X,(M^X)^\top] = \int_0^\cdot \Sigma(u) du.$$
  We have 
  \begin{equation} \label{EM11}
W \equiv \int_0^\cdot \sigma^{-1}(u) d M^{X}_u.
\end{equation}
Since $[W,W^\top]_t \equiv t I_d$, by L\'evy's characterization
theorem, $W$ is a standard $(\shf^X_t)$-Brownian motion.
%
     We set
      $$ M^{\hat \xi}_t := \hat \xi_t - \hat \xi_0 - \int_0^t \tilde b (u,\hat \xi_u)du, \ t \in [0,T],$$
      and we denote $W^{\hat \xi} := \int_0^\cdot \sigma^{-1}(u) d M^{\hat \xi}_u$.
Taking i) into account and the fact that
$\hat \xi$ and $X$ have the same  law,
then $W$ and $W^{\hat \xi}$ are identically distributed and so
$W^{\hat \xi}$ is an $\shf^{\hat \xi}$-Brownian motion.
Moreover the couple $(X_0, W)$ has the same distribution
as $(\hat \xi_0, W^{\hat \xi})$.
Consequently  $W^{\hat \xi}$ is an
$\shf^{\hat \xi}$-standard Brownian motion (independent of
$\hat \xi_0$) and
     the statement 1. follows.

    \item{iii)} It remains to prove the second statement. For this we show 
\begin{equation} \label{finiteMean}
\mathbb{E}\left(\int^{T}_{0}\left|b_c\left(s,\widehat{\xi}_s,m^\nu\left(s\right),Q^\nu\left(s\right)\right)\right|ds\right) < \infty.
\end{equation}
\noindent On the one hand, for all $t \in ]0,T]$,
\begin{align*}
\left|b_c\left(t,\widehat{\xi}_t,m^\nu\left(t\right),Q^\nu\left(t\right)\right)\right| &{}= \left|\Sigma\left(t\right)\sqrt{Q^\nu\left(t\right)^{-1}}\sqrt{Q^\nu\left(t\right)^{-1}}\left(\widehat{\xi}_t-m^\nu\left(t\right)\right)\right|\\
&{} \leq \left|\left|\Sigma\right|\right|_{\infty}\sqrt{\left|\left|Q^\nu\left(t\right)^{-1}\right|\right|}\left|\sqrt{Q^\nu\left(t\right)^{-1}}\left(\widehat{\xi}_t-m^\nu\left(t\right)\right)\right|\\
&{} = \frac{\left|\left|\Sigma\right|\right|_{\infty}}{\sqrt{\left|\left|Q^\nu\left(t\right)\right|\right|}}\left|\sqrt{Q^\nu\left(t\right)^{-1}}\left(\widehat{\xi}_t-m^\nu\left(t\right)\right)\right|,
\end{align*}
\noindent remembering that $Q^\nu\left(t\right)$ belongs to $S^{++}_d\left(\R\right)$.
\smallbreak
\noindent On the other hand, by \eqref{ENormXi}
$\left|\sqrt{Q^\nu\left(t\right)^{-1}}\left(\widehat{\xi}_t-m^\nu\left(t\right)\right)\right| \sim \left|Z\right|$ 
where $Z \sim \mathcal{N}\left(0,I_d\right)$. Then, \eqref{finiteMean} is verified if we show
\begin{equation} \label{finiteMeanBis}
\int^{T}_{0}\frac{1}{\sqrt{\left|\left|Q^\nu\left(t\right)\right|\right|}}dt < \infty.
\end{equation}
\noindent If $Q^\nu\left(0\right) = 0$, then for all $t \in ]0,T]$,
  for all $t \in ]0,T]$,  Remark \ref{rem_nondegen} implies 
\begin{equation*}
\frac{Q^\nu\left(t\right)}{t} = \sha\left(t\right)\left(\frac{1}{t}\int^{t}_{0}\sha\left(s\right)^{-1}\Sigma\left(s\right)
\left(\sha\left(s\right)^{-1}\right)^\top ds\right)\sha\left(t\right)^\top \underset{t\to 0}{\longrightarrow} \Sigma\left(0\right).
\end{equation*}
\noindent If $Q^\nu\left(0\right) \neq 0$,  then for all $]0,T]$,
again Remark \ref{rem_nondegen} yields
\begin{equation*}
\frac{\left|\left|Q^\nu\left(t\right)\right|\right|}{t} \geq \left| \ \left|\left|\sha\left(t\right)\frac{Q^\nu\left(0\right)}{t}\sha\left(t\right)^\top\right|\right|-\left|\left|\sha\left(t\right)\left(\frac{1}{t}\int^{t}_{0}\sha\left(s\right)^{-1}\Sigma\left(s\right)
\left(\sha\left(s\right)^{-1}\right)^\top ds\right)\sha\left(t\right)^\top\right|\right| \ \right| \underset{t\to 0}{\longrightarrow} + \infty,
\end{equation*}
\noindent where we have also used 
the fact $\sha\left(0\right) = I_d$ and the fact $\frac{1}{t}\int^{t}_{0}\sha\left(s\right)^{-1}\Sigma\left(s\right)\left(\sha\left(s\right)^{-1}\right)^Tds$ tends to $\Sigma\left(0\right)$ as $t$ tends to $0$ thanks to the continuity of $\Sigma,\sha^{-1}$ on $[0,T]$.
\smallbreak
\noindent Hence, for all $t \in ]0,T]$,
\begin{equation}
\lim\limits_{t \to 0} \frac{\sqrt{t}}{\sqrt{\left|\left|Q^\nu\left(t\right)\right|\right|}} = 
\begin{cases}
\frac{1}{\sqrt{\left|\left|\Sigma\left(0\right)\right|\right|}}, \ \rm{if}\  Q^\nu\left(0\right) = 0 \\
0, \ \rm{otherwise}.\\
\end{cases}
\end{equation}
\noindent This yields \eqref{finiteMeanBis} which implies  \eqref{finiteMean};
consequently the solution $X$ of \eqref{Rev-SDE} prolongates to $t = T$
and item 2. is proved.
\end{description}

\end{proof}
\noindent Though, this will not be exploited in the algorithm proposed at Section~\ref{S5}, 
it is interesting to note that 
the process $\xi$ introduced in \eqref{Rev-SDE} can  also be seen as the
solution of a McKean SDE.
Proposition \ref{McKean} below shows that \eqref{Rev-SDEIntro}
admits existence and uniqueness if and only if
Assumption \ref{ass_nu} is verified.
In particular we have the following.
\begin{prop} \label{McKean}
 \begin{enumerate}
 \item There is at most one  solution $(\xi, m, Q)$ of \eqref{Rev-SDEIntro}.
 \item Suppose the validity of Assumption \ref{ass_nu}. Let $\xi$ be the unique solution
   of \eqref{Rev-SDE}. Then
$(\xi, m^\nu, Q^\nu)$
   is a solution of \eqref{Rev-SDEIntro}.
\end{enumerate}
   \end{prop}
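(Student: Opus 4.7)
I will handle the existence statement (Part 2) first, since it is essentially a corollary of the machinery already built. Take $\xi$ to be the unique strong solution of the linear SDE \eqref{Rev-SDE} (whose existence is granted by Remark \ref{StrongEx} and Lemma \ref{HP_lemma}, item ii), which extends $\xi$ continuously to $t=T$). By Lemma \ref{HP_lemma}, the time-reversal $\widehat{\xi}=\xi_{T-\cdot}$ solves the forward OU equation \eqref{direct_OU} with Gaussian initial law $\shn(m^\nu(0),Q^\nu(0))$. Applying Remark \ref{RmCov} to $\widehat{\xi}$ gives $\E(\widehat{\xi}_u)=m^\nu(u)$ and $\mathrm{Cov}(\widehat{\xi}_u)=Q^\nu(u)$ for every $u\in[0,T]$, which, under the substitution $u=T-t$, is precisely the consistency requirement $m^\nu(T-t)=\E(\xi_t)$, $Q^\nu(T-t)=\mathrm{Cov}(\xi_t)$ for $t\in\,]0,T]$. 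Hence $(\xi,m^\nu,Q^\nu)$ solves \eqref{Rev-SDEIntro}.

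For the uniqueness statement (Part 1), let $(\xi,m,Q)$ be any solution. The idea is to show $m=m^\nu$ and $Q=Q^\nu$, after which the driving SDE becomes exactly \eqref{Rev-SDE} whose unique strong solution is $\xi$. Set $\mu(t):=\E(\xi_t)$. Taking expectations in \eqref{Rev-SDEIntro} and exploiting the affinity of $\widetilde{b}$ together with the constraint $\E(\xi_s-m(T-s))=0$ (which annihilates the $b_c$ contribution), one obtains
\begin{equation*}
\mu(t)=\bar m^\nu-\int_0^t\bigl[a(T-s)\mu(s)+c(T-s)\bigr]ds,\qquad t\in[0,T].
\end{equation*}
Setting $f(u):=\mu(T-u)$ and using the constraint $m(T-t)=\mu(t)$ on $]0,T]$ extended by continuity to $f(T)=\mu(0)=\bar m^\nu$, we see that $f$ satisfies the backward ODE \eqref{ODE_m}; its uniqueness yields $m=m^\nu$.

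For the covariance, I will apply Itô's product rule to $Y_tY_t^\top$ with $Y_t:=\xi_t-\mu(t)$. Using again the affinity of $\widetilde{b}$, the dynamics read
\begin{equation*}
dY_t=-\bigl[a(T-t)+\Sigma(T-t)Q(T-t)^{-1}\bigr]Y_t\,dt+\sigma(T-t)d\beta_t,
\end{equation*}
and taking expectations in the Itô expansion of $Y_tY_t^\top$ gives, using $\Sigma^\top=\Sigma$, $Q^\top=Q$, and the key simplification $\Sigma(T-t)Q(T-t)^{-1}Q(T-t)=\Sigma(T-t)$,
\begin{equation*}
\frac{d}{dt}Q(T-t)=-a(T-t)Q(T-t)-Q(T-t)a(T-t)^\top-\Sigma(T-t).
\end{equation*}
Reversing time and using the continuity extension $Q(T)=\bar Q^\nu$, the function $Q$ satisfies \eqref{ODE_Q}, so $Q=Q^\nu$ by uniqueness. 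The SDE for $\xi$ then coincides with \eqref{Rev-SDE}, which has a unique strong solution (Remark \ref{StrongEx}).

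\textbf{Main obstacle.} The delicate point is the behaviour of $Q(T-s)^{-1}$ near the boundary $s=T$ (i.e.\ $T-s$ close to $0$), since \textit{a priori} $Q^\nu(0)$ may be only semidefinite. Under Assumption \ref{ass_nu}, Remark \ref{rem_nondegen} ensures $Q^\nu(u)\in S^{++}_d(\R)$ for $u\in\,]0,T]$, and the integrability argument of Lemma \ref{HP_lemma}, item iii) shows that the singularity is integrable. A second subtlety is justifying the differentiation-under-the-expectation in the Itô derivation of the covariance ODE; this will follow from the affine structure of the drift once $m,Q$ are treated as fixed Borel functions, which gives a Gronwall-type bound on the second moments of $\xi$ on any subinterval $[0,T-\varepsilon]$, and continuity then furnishes the terminal data as $\varepsilon\downarrow 0$.
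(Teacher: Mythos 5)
Your proof is correct and follows essentially the same route as the paper: Part 2 via Lemma \ref{HP_lemma} and Remark \ref{RmCov}, and Part 1 by showing that the consistency constraint forces the mean and covariance to satisfy the backward ODEs \eqref{ODE_m}--\eqref{ODE_Q} (hence $m=m^\nu$, $Q=Q^\nu$), after which strong uniqueness for \eqref{Rev-SDE} (Remark \ref{StrongEx}) pins down $\xi$. The only cosmetic difference is that where you derive the moment equations by hand (taking expectations and applying It\^o's product rule, with the cancellation $\Sigma Q^{-1}Q=\Sigma$), the paper obtains the same identities by invoking the moment ODEs for linear SDEs from Problem 6.1, Chapter 5 of Karatzas--Shreve.
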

\begin{proof}
\begin{enumerate}
\item Let $\left(\xi,m,Q\right)$ be a solution of \eqref{Rev-SDEIntro}. By definition, $\xi$ solves an SDE of type \eqref{direct_OU} replacing $a$ by 
$a^\Sigma : s \mapsto - a\left(T-s\right) - \Sigma\left(T-s\right)Q\left(T-s\right)^{-1}$ and $c$ by $c^\Sigma : s \mapsto - c\left(T-s\right) + \Sigma\left(T-s\right)Q\left(T-s\right)^{-1}m\left(T-s\right)$.
\smallbreak
\noindent By Problem 6.1 Section 5 in \cite{karatshreve}, the function $t \mapsto \mathbb{E}\left(\xi_t\right) (= m\left(T-t\right))$ (resp. $t \mapsto {\rm Cov}\left(\xi_t\right) (= Q\left(T-t\right))$) solves the first line of \eqref{ODE_m} (resp. \eqref{ODE_Q}) replacing $a$ by $a^\Sigma$ and $c$ by $c^\Sigma$. Then, the following identities hold for all $t \in ]0,T]$:
\begin{equation} \label{Id-rev-m}
m\left(T-t\right) = \mathbb{E}\left(\xi_0\right) - \int^{t}_{0}a\left(T-s\right)m\left(T-s\right) + c\left(T-s\right)ds,
\end{equation}
\begin{equation} \label{Id-rev-Q}
Q\left(T-t\right) = {\rm Cov}\left(\xi_0\right) - \int^{t}_{0}Q\left(T-s\right)a\left(T-s\right)^\top + a\left(T-s\right)Q\left(T-s\right)ds - \int^{t}_{0}\Sigma\left(T-s\right)ds, 
\end{equation}
\noindent remarking that
\begin{equation*}
a^\Sigma\left(t\right)m\left(T-t\right) + c^\Sigma\left(t\right) = -a\left(T-t\right)m\left(T-t\right) - c\left(T-t\right),
\end{equation*}
\begin{equation*}
Q\left(T-t\right)a^{\Sigma}\left(t\right)^\top +  a^{\Sigma}\left(T-t\right) Q\left(t\right) = - Q\left(T-t\right)a\left(T-t\right)^\top -  a\left(T-t\right) Q\left(T-t\right) - 2\Sigma\left(T-t\right).
\end{equation*}

\smallbreak
\noindent Applying the change of variable $t \mapsto T-t$ in identities \eqref{Id-rev-m} and \eqref{Id-rev-Q}, we show that $m$ (resp. $Q$) solves
the  backward ODE \eqref{ODE_m} (resp. \eqref{ODE_Q}),
which is well-posed. We recall that $\xi_0$ is distributed
according to $\nu$.
Then, $m=m^\nu$ and $Q = Q^\nu$, see the beginning of Section \ref{R41}.
 As a consequence, $\xi$ solves \eqref{Rev-SDE} and is uniquely determined thanks to Remark \ref{StrongEx}. This shows the validity of item 1.

\item Let $\xi$ be the unique solution of \eqref{Rev-SDE}. Then,
  the time-reversed process $\widehat{\xi}$ solves \eqref{direct_OU} and $\xi_T \sim \mathcal{N}\left(m^\nu\left(0\right),Q^\nu\left(0\right)\right)$, thanks to item 1. of Lemma \ref{HP_lemma}. Now, by  Remark \ref{RmCov}, we have  $\mathbb{E}\left(\widehat{\xi}_t\right) = m^\nu\left(t\right)$, ${\rm Cov}\left(\widehat{\xi}_t\right) = Q^\nu\left(t\right)$ for all $t \in [0,T[$. This concludes
  the proof of item 2.
\end{enumerate}
\end{proof}

\begin{rem} \label{RIOR}
  \begin{enumerate}
  \item In \cite{LucasOR} we have discussed existence
    and uniqueness of more  general McKean problems
    involving the densities of the marginal laws instead of
    expectation and covariance matrix, where the solution
    is the time-reversal of some (not necessarily Gaussian)
    diffusion.
  \item In particular, in Section 4.5 of \cite{LucasOR}
    we have investigated existence and uniqueness
    of
    \begin{equation}\label{MKIntro}
\begin{cases}
\displaystyle Y_t = Y_0 - \int^{t}_{0}\tilde b\left(T-r,Y_r\right)dr +
 \int^{t}_{0}\left\{\frac{\mathop{div_y}\left(\Sigma_{i.}\left(T-r\right)p_{r} \left(Y_r\right)\right)}{p_{r}\left(Y_r\right)}\right\}_{i\in[\![1,d]\!]}dr + \int^{t}_{0} \sigma\left(T-r\right)d\beta_r, \\
p_t \ \rm{density\ law\ of} \ {\bf p}_t =  \rm{law\ of} \ Y_t, t \in ]0,T[,
\\
Y_0 \sim  {\bf p_T} =  \nu,
\end{cases}
\end{equation}
where $\beta$ is a $m$-dimensional Brownian motion and
$\Sigma = \sigma \sigma^\top$, whose solution is the couple $(Y,{\bf p}).$
Moreover, when the solution exists, there
is a probability-valued function ${\bf u}$
    defined on $[0,T]$ solution
    of the Fokker-Planck equation
   \begin{equation} \label{EDPTerm0Bis}
\left \{
\begin{array}{lll}
\partial_t {\bf u} &=& \frac{1}{2} 
\displaystyle{\sum_{i,j=1}^d} \partial_{ij}^2 \left( (\sigma \sigma^{\top})_{i,j}(t)
   {\bf u} \right) - div \left( \tilde b(t,x) {\bf u} \right)\\
{\bf u}(T) &=& \nu.
\end{array}
\right .
\end{equation}
 \item Suppose that
   $\nu$ is a Gaussian law on $\R^d$.
   It is possible  to show that
   Assumption \ref{ass_nu} is equivalent
   to the existence of a probability-valued
   solution  ${\bf u}$ of \eqref{EDPTerm0Bis}.
   In this case the McKean problems
   \eqref{Rev-SDEIntro} and \eqref{MKIntro}
   are equivalent. In particular the component $Y$
   of the solution of \eqref{MKIntro} is Gaussian.
    \end{enumerate}
  \end{rem}

\noindent We continue with a preliminary lemma.
Let $W$ be a Brownian motion.
For each $\left(s,x\right) \in [0,T]\times\R^d$, $X^{s,x}$
will denote below the process
$$ X^{s,x}_t :=  x+ \int_s^t \sigma(r)dW_r, \ t \in [s,T].$$
\begin{lem} \label{Abstract-Lemma}
\noindent Suppose the validity of Assumption 
\ref{ass_g}.
Let $v : [0,T]\times\R^d \rightarrow \R$ 
of class $ \shc^{0,1}\left([0,T],\R^d\right)$, with polynomial growth and such that the function
 $H^v : \left(t,x\right) \mapsto H\left(t,x,v\left(t,x\right),\nabla_xv\left(t,x\right)\right)$ is continuous with polynomial growth. Then, the following assertions are equivalent.
\begin{enumerate}
\item $v$ is a viscosity solution of \eqref{SLPDE}.
\item For each $\left(s,x\right)\in[0,T]\times\R^d$,
\begin{equation} \label{Id-vflow}
v\left(s,x\right) = \mathbb{E}\left(\int^{T}_{s}H\left(r, X^{s,x}_r,v\left(r,X^{s,x}_r\right),\nabla_xv\left(r,X^{s,x}_r\right)\right)dr + g\left(X^{s,x}_T\right)\right).
\end{equation}
\item $v$ is of class $\shc^{1,2}\left([0,T[,\R^d\right)$ and is a
 (classical) solution of \eqref{SLPDE}.
\end{enumerate}
\end{lem}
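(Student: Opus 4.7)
The plan is to run the cycle $(3)\Rightarrow(1)\Rightarrow(2)\Rightarrow(3)$. The first implication is immediate, since any classical $\shc^{1,2}$ solution of \eqref{SLPDE} is \emph{a fortiori} a viscosity solution. The unifying idea for the remaining two implications is to \emph{freeze the nonlinearity}: because $v$ is a fixed $\shc^{0,1}$ function and $H^v(t,x):=H(t,x,v(t,x),\nabla_x v(t,x))$ is continuous with polynomial growth by hypothesis, \eqref{SLPDE} reads as the \emph{linear} Cauchy problem with source $H^v$,
\begin{equation}\label{PlanLin}
\partial_t u(t,x) + \tfrac{1}{2}\mathrm{Tr}\!\bigl(\Sigma(t)\nabla^2_x u(t,x)\bigr) + H^v(t,x) = 0,\quad u(T,\cdot)=g.
\end{equation}

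For $(1)\Rightarrow(2)$, introduce the Feynman-Kac candidate
$$w(s,x) := \mathbb{E}\!\left[\int_s^T H^v\!\bigl(r,X^{s,x}_r\bigr)dr + g\!\bigl(X^{s,x}_T\bigr)\right].$$
Since $X^{s,x}_t - x \sim \mathcal{N}\!\bigl(0,\int_s^t\Sigma(r)dr\bigr)$ has a smooth density $p(s,t,x,y)$ non-degenerate for $t>s$, expressing $w$ as a Gaussian convolution against $H^v$ and $g$ and differentiating under the integral shows that $w\in\shc^{1,2}([0,T[\times\R^d)$, has polynomial growth, and solves \eqref{PlanLin} classically, hence also in the viscosity sense. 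The linear problem \eqref{PlanLin} admits a standard comparison principle in the class of continuous functions with polynomial growth (obtained by the usual exponential change of variable), so uniqueness forces $v=w$, which is exactly \eqref{Id-vflow}.

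For $(2)\Rightarrow(3)$ the work is already done: under (2), $v$ coincides pointwise with the function $w$ above, and we have just shown $w\in\shc^{1,2}([0,T[\times\R^d)$ and $w$ solves \eqref{PlanLin} classically; substituting back $H^v(t,x)=H(t,x,v(t,x),\nabla_x v(t,x))$ shows that $v$ is a classical solution of \eqref{SLPDE}. The main technical obstacle is thus the regularity and PDE analysis of $w$: one differentiates
$$w(s,x) = \int_s^T\!\!\int_{\R^d} H^v(r,y)\,p(s,r,x,y)\,dy\,dr + \int_{\R^d} g(y)\,p(s,T,x,y)\,dy$$
in $(s,x)$, using the Kolmogorov backward identity $\partial_s p + \tfrac{1}{2}\mathrm{Tr}(\Sigma\nabla^2_x p)=0$ together with the Leibniz boundary contribution at $r=s$, which produces exactly $-H^v(s,x)$. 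All interchanges are justified by combining the polynomial growth of $H^v,g$ with the Gaussian decay of $\nabla_x p$ and $\nabla^2_x p$; coupling this with the polynomial-growth comparison principle for \eqref{PlanLin} closes the cycle.
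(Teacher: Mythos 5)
Your proposal is correct and follows essentially the same route as the paper: freeze the nonlinearity into the source $H^v$, show the Feynman--Kac candidate $w^v$ is a classical (hence viscosity) solution of the resulting linear Cauchy problem by differentiating the Gaussian convolution, and conclude via uniqueness of viscosity solutions with polynomial growth for that linear equation. The only difference is cosmetic -- you organize the argument as the cycle $(3)\Rightarrow(1)\Rightarrow(2)\Rightarrow(3)$, while the paper proves $1\Leftrightarrow 2$ and $1\Leftrightarrow 3$ separately, citing a uniqueness theorem where you invoke the standard comparison principle.
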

\begin{proof}
  \noindent Let $v$ as in the lemma statement.
\begin{description}
\item{a)}  
\noindent 
We set
\begin{equation} \label{E310bis}
 w^v(s,x) := \mathbb{E}\left(g(X^{s,x}_T) + \int_s^T H^v(r, X^{s,x}_r) dr\right),  \ \left(s,x\right) \in [0,T[\times\R^d.
\end{equation}
We show first that $w^v$ is a (classical) solution in
$\shc^{1,2}\left([0,T[,\R^d\right) \cap \shc^0\left([0,T]\times\R^d\right)$
    with polynomial growth of the linear PDE
\begin{equation} \label{lin_heat_PDE}
\left\{
\begin{array}{lll}
\partial_t w\left(t,x\right) + \frac{1}{2} Tr[\sigma \sigma^\top(t) \nabla_x^2 w\left(t,x\right)]  + H^v\left(t,x\right) = 0, \ \left(t,x\right) \in [0,T[\times\R^d
\\
w(T,\cdot)= g.
\end{array}
\right.
\end{equation}
  Indeed $w^v$ can be rewritten as  
\begin{equation} \label{E310}
w\left(s,x\right) = \int_{\R^d}g\left(z\right)p_T\left(s, z - x\right)dz
 + \int^{T}_{s}\int_{\R^d}H^v\left(r,z\right)p_r\left(s,z - x\right)dzdr, \ \left(s,x\right) \in [0,T[\times\R^d,
\end{equation}
\noindent where for each $r \in [0,T]$, $s \in [0,r[$,
$p_r\left(s,\cdot\right)$ is the density of the r.v.
 $\int_s^r \sigma(u) dW_u$, i.e. a Gaussian r.v. with mean zero
 and covariance $\int^{r}_{s}\Sigma\left(u\right)du$.
Moreover, it is well-known, see e.g. Remark 3.2 in  \cite{DGRClassical}, 
that for each $r \in [0,T]$, $p_r: [0,r[ \times \R^d \rightarrow \R$ is a smooth solution of
\begin{equation} \label{eq EDDP}
\partial_{t} p_r(t,z) + \frac{1}{2} Tr\left(\Sigma\left(t\right)\nabla^2_x p_r\left(t,z\right)\right) = 0, \left(t,z\right) \in [0,r[\times\R^d.
\end{equation}
 Consequently, by usual integration theorems allowing to commute
 derivation and integrals, one shows \eqref{lin_heat_PDE}.
\item{b)} Consequently $w^v$ is a viscosity solution
  \eqref{lin_heat_PDE}.
\item{c)} If 1. holds then $v$ is also a viscosity solution of
  \eqref{lin_heat_PDE}.
By point 1. of Remark \ref{uniq_visco_rem},
equation \eqref{lin_heat_PDE} admits at most one continuous viscosity solution with polynomial growth. So $v = w^v$ which means 2.
\item{d)} If 2. holds then $v = w^v$ and by b) $v$ is
  a viscosity solution of \eqref{lin_heat_PDE} and
  therefore of \eqref{SLPDE}.
\item{e)} 3. implies obviously 1. Viceversa, if item 1.
holds, a) implies that $w^v$ is a classical solution 
of \eqref{lin_heat_PDE}; b) and the uniqueness
of viscosity solutions for previous linear equation
implies $v= w^v$ and finally item 3.
  \end{description}

\end{proof}

\noindent We state now the announced representation result.
\begin{thm}\label{SLPDE-Rep}
Suppose the validity of Assumption \ref{ass_g}.
Let $\nu$ be a Gaussian probability fulfilling
Assumption \ref{ass_nu} with associated functions
$m^\nu$ and $Q^\nu$.

\noindent Let $v \in \shc^{0,1}\left([0,T],\R^d;\R\right)$ with polynomial growth and such that $H^v : \left(t,x\right) \mapsto H\left(t,x,v\left(t,x\right),\nabla_x v\left(t,x\right)\right)$ is continuous with polynomial growth.  Then, $v$ 
 is a viscosity solution of \eqref{SLPDE}  if and only if for all $t \in [0,T]$
\begin{equation} \label{RepFormula}
\left\{
\begin{array}{lll}
 \displaystyle \xi_t = \xi_0 - \int^{t}_{0}\widetilde{b}\left(T-s,\xi_s\right) + b_c\left(T-s,\xi_s,m^\nu\left(T-s\right),Q^\nu\left(T-s\right)\right)ds + \int^{t}_{0}\sigma\left(T-s\right)d\beta_s, \\
\xi_0 \sim \nu, \\
\displaystyle v\left(t,\widehat{\xi}_t\right) = \mathbb{E}\left(\int^{T}_{t}
H\left(s,\widehat{\xi}_s,v\left(s,\widehat{\xi}_s\right),
\nabla_xv\left(s,\widehat{\xi}_s\right)\right) - \left<\widetilde{b}\left(s,\widehat{\xi}_s\right),\nabla_x v\left(s,\widehat{\xi}_s\right)\right>ds + g\left(\widehat \xi_T \right)\middle|\widehat{\xi}_t\right).
\end{array}
\right.
\end{equation}
\end{thm}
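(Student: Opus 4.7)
My plan is to derive both implications from Lemma \ref{Abstract-Lemma}, using in one direction It\^o's formula along the trajectories of $\widehat{\xi}$, and in the other direction the Markov property of $\widehat{\xi}$ combined with the full-support (non-degeneracy) of its marginal laws on $]0,T]$.

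For the direct implication, I would assume that $v$ is a viscosity solution of \eqref{SLPDE}. Lemma \ref{Abstract-Lemma} then upgrades $v$ to a classical $\shc^{1,2}([0,T[,\R^d)$-solution. Since Lemma \ref{HP_lemma} gives that $\widehat{\xi}$ satisfies \eqref{direct_OU} with respect to some $\shf^{\widehat{\xi}}$-Brownian motion $W$, I would apply It\^o's formula to $v(s,\widehat{\xi}_s)$ on $[t,T]$. Using the PDE to rewrite $\partial_s v + \frac{1}{2} Tr(\Sigma \nabla^2_x v) = -H^v$, the drift term becomes $\langle \widetilde{b},\nabla_x v\rangle - H^v$. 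Taking the conditional expectation with respect to $\widehat{\xi}_t$, the polynomial growth of $\nabla_x v$ combined with the uniformly controlled Gaussian moments of $\widehat{\xi}_s$ on $[0,T]$ ensures that $\int_0^{\cdot} \nabla_x v(s,\widehat{\xi}_s)^\top \sigma(s)\, dW_s$ is a true martingale, whose conditional increment vanishes; this yields \eqref{RepFormula}.

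For the reverse implication, I would first convert the trajectory-based identity \eqref{RepFormula} into a pointwise identity on $[0,T]\times\R^d$. For any $t \in ]0,T[$, Remark \ref{rem_nondegen} gives $Q^\nu(t) \in S^{++}_d(\R)$, so $\widehat{\xi}_t \sim \mathcal{N}(m^\nu(t),Q^\nu(t))$ has full support on $\R^d$. Since $\widehat{\xi}$ is a Markov process, the a.s. identity in \eqref{RepFormula} yields, for almost every realization of $\widehat{\xi}_t$ and therefore, by continuity of both sides in $x$, for every $x \in \R^d$,
\begin{equation*}
v(t,x) = \mathbb{E}\left[ g(Y^{t,x}_T) + \int_t^T \left( H^v(s,Y^{t,x}_s) - \langle \widetilde{b}(s,Y^{t,x}_s), \nabla_x v(s,Y^{t,x}_s)\rangle \right) ds \right],
\end{equation*}
where $Y^{t,x}$ denotes the solution of \eqref{direct_OU} starting at $(t,x)$. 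I would then mimic step a) of the proof of Lemma \ref{Abstract-Lemma}, replacing the drift-free diffusion by the drifted Ornstein--Uhlenbeck process $Y^{t,x}$: thanks to the explicit Gaussian form of the transition density of $Y$, the right-hand side above, as a function $\tilde{w}(t,x)$, should belong to $\shc^{1,2}([0,T[,\R^d)$ with polynomial growth and solve classically the linear PDE
\begin{equation*}
\partial_t \tilde{w} + \langle \widetilde{b},\nabla_x \tilde{w}\rangle + \tfrac{1}{2} Tr(\Sigma \nabla^2_x \tilde{w}) + H^v - \langle \widetilde{b},\nabla_x v\rangle = 0,\quad \tilde{w}(T,\cdot)=g.
\end{equation*}
Since $\tilde{w}=v$ on $[0,T]\times\R^d$, one has $\nabla_x \tilde{w}=\nabla_x v$, the two drift contributions cancel, and $v$ solves \eqref{SLPDE} classically, hence in the viscosity sense.

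The main obstacle I anticipate is the regularity bootstrap $\tilde{w}\in\shc^{1,2}$: although the source term $H^v - \langle \widetilde{b},\nabla_x v\rangle$ only has polynomial growth and continuity, all smoothness has to be provided by the Gaussian OU heat kernel via differentiation under the integral, exactly as in step a) of Lemma \ref{Abstract-Lemma}, with the additional care of handling the $x$-dependence of the mean of $Y^{t,x}_s$. A secondary delicate step is the Markov/full-support transfer from trajectories to arbitrary points, which rests on the non-degeneracy of $Q^\nu(t)$ for $t>0$ provided by Remark \ref{rem_nondegen} together with the continuity in $x$ of both sides.
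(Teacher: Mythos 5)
Your route is genuinely different from the paper's. The paper never applies It\^o's formula to $v$: it performs an equivalent change of measure $\P\to\Q$ (justified by Lemma \ref{Girsanov_OU}) which removes the drift of $\widehat{\xi}$, shows that the $\P$-representation \eqref{RepFormula} is equivalent to the drift-free $\Q$-representation \eqref{RepFormulaBis} by comparing the $\P$-martingale $M$ of \eqref{P-mart} with the $\Q$-martingale $\bar M$ through a covariation identity ($[v(\cdot,\widehat{\xi}),\int L^i dW^i]$ computed via Proposition 3.10 in \cite{gr}, which only uses $v\in\shc^{0,1}$), then passes to the pointwise identity \eqref{E436} by freezing/flow and concludes with Lemma \ref{Abstract-Lemma} applied under $\Q$ — both implications are treated at once. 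You instead stay under $\P$: forward direction by Lemma \ref{Abstract-Lemma} (1 $\Rightarrow$ 3) plus It\^o along the drifted process \eqref{direct_OU}; converse by Markov property, non-degeneracy of $Q^\nu(t)$ for $t>0$ (Remark \ref{rem_nondegen}) and continuity to obtain a pointwise identity, followed by a Feynman--Kac bootstrap with the OU Gaussian kernel and cancellation of the drift terms. The skeleton is viable; your full-support/continuity transfer is actually more explicit than the paper's corresponding (rather terse) step, and the $\shc^{1,2}$ bootstrap for $\tilde w$ with a merely continuous source is the same type of claim as step a) of Lemma \ref{Abstract-Lemma}, now with a kernel whose mean depends affinely on $x$; note you could bypass that bootstrap entirely by arguing at the viscosity level, since at any touching point of a $\shc^{1,2}$ test function with the $\shc^{0,1}$ function $v$ the spatial gradients coincide, so the terms $\left<\widetilde b,\nabla_x\varphi\right>$ and $-\left<\widetilde b,\nabla_x v\right>$ cancel inside the viscosity inequalities.

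The one genuine gap is your repeated reliance on polynomial growth of $\nabla_x v$, which is not among the hypotheses of Theorem \ref{SLPDE-Rep} (it appears only later, as Assumption \ref{optimal_control}, for Corollary \ref{RepFormulaControl}). You need it (i) to turn the local martingale $\int\nabla_x v^\top\sigma\,dW$ into a true martingale (and to justify the limit $T_0\to T$, since $v$ is only $\shc^{1,2}$ on $[0,T[$), and (ii) to make $\tilde w$ well defined, continuous in $x$ and differentiable under the integral sign, because your source term is $H^v-\left<\widetilde b,\nabla_x v\right>$ rather than $H^v$. The theorem only controls $v$ and $H^v$; beyond the integrability implicitly required for \eqref{RepFormula} to be meaningful, no quantitative bound on $\nabla_x v$ is granted, and the paper's Girsanov/covariation mechanism is built precisely so that neither It\^o on $v$ nor any moment estimate on $\nabla_x v(s,\widehat{\xi}_s)$ is needed (under $\Q$ the correction term disappears and only $H^v$ and $g$ must be integrated). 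As written, your argument therefore proves the statement only under the additional hypothesis that $\nabla_x v$ has polynomial growth; to reach the stated generality you would have to either produce a localization/uniform-integrability argument without that growth, or adopt the paper's change-of-measure route.
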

\begin{rem} \label{RSLPDE-Rep}
  The affine drift $\widetilde{b}$ remains a degree of freedom of the
representation. In Section \ref{S5}, in the framework of 
the Hamilton-Jacobi-Bellman PDEs are given elements 
to choose rationally $\widetilde{b}$.
\end{rem}
\begin{rem} \label{uniq_visco_rem}
 We remark that previous representation \eqref{RepFormula}
is valid even if uniqueness does not hold for the
semilinear  PDE \eqref{SLPDE}. In that case even the equation
\eqref{RepFormula} does not admit uniqueness.
However, we provide below 
  some typical situations
  for which \eqref{SLPDE} admits at most one viscosity solution, within
 different classes of solutions.
\begin{enumerate}
\item Suppose the validity of Assumption \ref{ass_g}. Suppose also that $H$ is continuous with polynomial growth in $x$ and linear growth in $\left(y,z\right)$. In addition, we suppose that
  $H$ is Lipschitz in $\left(y,z\right)$ uniformly in $\left(t,x\right)$ and suppose that for all $R > 0$, there exists $m_R : \R \rightarrow \R^+$, tending to $0$ at $0^+$ such that
\begin{equation*}
\left|H\left(t,x',y,z\right) - H\left(t,x,y,z\right)\right| \leq m_R\left(\left|x'-x\right|\left(1 + \left|z\right|\right)\right), 
\end{equation*}
\noindent for all $t \in [0,T]$, $z \in \R^d$ and $\left|x\right|,\left|x'\right|,\left|y\right| \leq R$. Then, by Theorem 5.1 in \cite{PardPradRao},
implies that \eqref{SLPDE} admits at most one continuous viscosity solution with polynomial growth. 
In fact that theorem
states uniqueness even in a wider class of solutions.
\item The
 first theorem in \cite{LionsSouganidisJensen} formulates a uniqueness result
 in a suitable class of bounded uniformly continuous solutions.
 Alternative assumptions are available to ensure uniqueness
in different classes of  unbounded functions, 
for fully non-linear parabolic Cauchy problems.
See for instance Corollary 2 in \cite{ishiiKobayasi}, 
Theorem 3.1 in \cite{NUNZIANTE}, 
 \cite{userguide_visco}, \cite{Ishii_uniq}.
\end{enumerate}
\end{rem}

\begin{prooff} \ (of Theorem \ref{SLPDE-Rep}).
\smallbreak
\noindent Let $v$ as in the statement.
\begin{enumerate}

\item Lemma \ref{HP_lemma} implies that there exists an $\mathcal{F}^{\widehat{\xi}}$-Brownian motion $W$ such that, under $\mathbb{P}$,
\begin{equation} \label{OU-xi}
\widehat{\xi}_t = \widehat{\xi}_0 + \int^{t}_{0}\widetilde{b}\left(s,\widehat{\xi}_s\right)ds + \int^{t}_{0}\sigma\left(s\right)dW_s, \ t \in [0,T],
\end{equation}
\noindent where $\widehat{\xi}_0 \sim \mathcal{N}\left(m^\nu\left(0\right),Q^\nu\left(0\right)\right)$.
In particular 
\begin{equation} \label{EexpXi}
\mathbb{E}\left(\sup_{t\in[0,T]} \left|\widehat{\xi}_s\right|^p\right) < \infty, \ \forall p \ge 1.
\end{equation}
This, together with Assumption \ref{ass_g} and the polynomial growth of $H^v$
also imply that the r.v.
\begin{equation*}\label{ESQ}
\int^{T}_{0}
H\left(s,\widehat{\xi}_s,v\left(s,\widehat{\xi}_s\right),
\nabla_xv\left(s,\widehat{\xi}_s\right)\right) - \left<\widetilde{b}\left(s,\widehat{\xi}_s\right),\nabla_x v\left(s,\widehat{\xi}_s\right)\right>ds + g(\widehat \xi_T) 
\end{equation*}
\noindent is square integrable. 
\item We give now an equivalent formulation of \eqref{RepFormula} using a change of probability measure. 
\smallbreak
\noindent  
We set
 $ L_s : = \sigma\left(s\right)^{-1}\widetilde{b}\left(s,\widehat{\xi}_s\right),
 \ s \in [0,T]$.
 We denote by $\mathbb{Q}$, the probability equivalent to $\mathbb{P}$ on $\shf^{\widehat{\xi}}_T$
 defined by $\frac{d\mathbb{Q}}{d\mathbb{P}} = \mathcal{E}\left(-\sum^{d}_{i=1}\int^{\cdot}_{0}L^i_sdW^i_s\right)_T$, being well-defined thanks to Lemma \ref{Girsanov_OU}.

\smallbreak 
\noindent The goal is to show that $v$ fulfills \eqref{RepFormula} if and only if it fulfills for all $t \in [0,T]$
\begin{equation}\label{RepFormulaBis}
v\left(t,\widehat{\xi}_t\right) = \mathbb{E}_{\mathbb{Q}}\left(\int^{T}_{t}
H\left(s,\widehat{\xi}_s,v\left(s,\widehat{\xi}_s\right),\nabla_xv\left(s,\widehat{\xi}_s\right)\right)ds + g\left(\widehat{\xi}_T\right)\middle|\widehat{\xi}_t\right).
\end{equation}
We remark that, 
\begin{equation} \label{OU-xiBis}
\widehat{\xi}_t = \widehat{\xi}_0  + \int^{t}_{0}\sigma\left(s\right)d\widetilde{W}_s,
 \ t \in [0,T],
\end{equation}
where
\begin{equation}\label{ETildeW} 
\widetilde{W} := W + \int^{\cdot}_{0}L_s ds,
\end{equation}
which is a Brownian motion under $\mathbb{Q}$ thanks to Girsanov's Theorem 5.1 in \cite{karatshreve}.
By item 1. 
$$ \int^{T}_{0}
H\left(s,\widehat{\xi}_s,v(s,\widehat{\xi}_s),\nabla_xv\left(s,\widehat{\xi}_s\right)\right)ds + 
g(\widehat{\xi}_T),$$
is obviously also square integrable under $\Q$.

\noindent We set
 $H_s  := H\left(s,\widehat{\xi}_s,v\left(s,\widehat{\xi}_s\right),\nabla_xv\left(s,\widehat{\xi}_s\right)\right), \ s \in [0,T],$ for the sake of brevity. 

\noindent We remark first that for each given $s \in [0,T]$,
\begin{equation} \label{EToBegin}
\left<\widetilde{b}\left(s,\widehat{\xi}_s\right),\nabla_x v\left(s,\widehat{\xi}_s\right)\right>  = \left<\sigma\left(s\right)\sigma\left(s\right)^{-1}\widetilde{b}\left(s,\widehat{\xi}_s\right),\nabla_x v\left(s,\widehat{\xi}_s\right)\right>= \left<L_s,\sigma\left(s\right)^\top\nabla_xv\left(s,\widehat{\xi}_s \right)\right>. 
\end{equation} 
\smallbreak
\noindent Then, \eqref{EToBegin} combined with the Markov property of $\widehat{\xi}$ implies that \eqref{RepFormula} is equivalent to 
\begin{equation*}\label{MarkovRep}
\displaystyle v\left(t,\widehat{\xi}_t\right) = \mathbb{E}\left(\int^{T}_{t}
\left(H_s - \left<L_s,\sigma\left(s\right)^\top\nabla_xv\left(s,\widehat{\xi}_s\right)\right> \right) ds + g\left(\widehat{\xi}_T\right)\middle|\mathcal{F}^{\widehat{\xi}}_t\right),
\end{equation*}
\noindent which can be rewritten
\begin{equation*}\label{MakovRep_bis}
\displaystyle v\left(t,\widehat{\xi}_t\right) = M_t - 
\int^{t}_{0} \left(H_s - \left<L_s,\sigma\left(s\right)^\top\nabla_xv\left(s,\widehat{\xi}_s\right)\right>\right)ds,
\end{equation*}
\noindent where $M$ is the $\mathbb{P}$-martingale
\begin{equation} \label{P-mart}
M_t = \mathbb{E}\left(\int^{T}_{0}H_s - \left<L_s,\sigma\left(s\right)^\top\nabla_xv\left(s,\widehat{\xi}_s\right)\right>ds + g\left(\widehat{\xi}_T\right)\middle|\mathcal{F}^{\widehat{\xi}}_t\right), \ t \in [0,T].
\end{equation}
\noindent Similarly, \eqref{RepFormulaBis} is equivalent to 
\begin{equation*}\label{MakovRep_ter}
\displaystyle v\left(t,\widehat{\xi}_t\right) = \bar{M}_t - \int^{t}_{0}H_sds,
\end{equation*}
\noindent where $\bar{M}$ is the $\mathbb{Q}$-martingale 
\begin{equation} \label{Q-mart}
\bar{M}_t = \mathbb{E}_{\mathbb{Q}}\left(\int^{T}_{0}H_sds + g\left(\widehat{\xi}_T\right)\middle|\mathcal{F}^{\widehat{\xi}}_t\right), \ t \in [0,T].
\end{equation}
\noindent To show the aforementioned equivalence, it suffices now to show 
\begin{equation*} \label{Id-mart}
   \bar{M}_t - M_t = \int^{t}_{0}\left<L_s,\sigma\left(s\right)^\top\nabla_xv\left(s,\widehat{\xi}_s\right)\right>ds, \ t \in [0,T].
\end{equation*}
\noindent On the one hand, Theorem 1.7 Chapter 8 in \cite{RevuzYorBook} implies that the process $\widetilde{M} := M + \sum^{d}_{i=1}[M,\int^{\cdot}_{0}L^i_sdW^i_s]$ is a $\mathbb{Q}$-local martingale. On the other hand, for each $i \in [\![1,d]\!]$ by Proposition 3.10 in \cite{gr} we have
\begin{align*}
[M,\int^{\cdot}_{0}L^i_sdW^i_s] &= [v\left(\cdot,\widehat{\xi}\right),\int^{\cdot}_{0}L^i_s dW^i_s] \\
& = \int^{\cdot}_{0}L^i_s\left(\sigma\left(s\right)^\top\nabla_xv\left(s,\widehat{\xi}_s\right)\right)_ids,
\end{align*}
\noindent combining \eqref{Q-mart} with the usual properties of covariation
for semimartingales.
This means that
\begin{equation*} \label{Id-mart-interm}
\widetilde{M} = M + \int^{\cdot}_{0}\left<L_s,\sigma\left(s\right)^\top\nabla_xv\left(s,\widehat{\xi}_s\right)\right>ds
\end{equation*}
\noindent is a $\mathbb{Q}$-local martingale. Now, 
\begin{align*}
\widetilde{M}_T &= M_T + \int^{T}_{0}\left<L_s,\sigma\left(s\right)^\top\nabla_xv\left(s,\widehat{\xi}_s\right)\right>ds \\
&{} = \int^{T}_{0}H_sds + g\left(\widehat{\xi}_T\right),
\end{align*}
\noindent thanks to \eqref{P-mart}. Since $\bar{M}$ and $\widetilde{M}$ are $\mathbb{Q}$-local martingales being equal at $t=T$, we have $\bar{M} = \widetilde{M}$. This shows the validity of point 2.
\smallbreak

\item 
For each $\left(s,x\right)\in[0,T]\times\R^d$, we set $X^{s,x} := x + \int^{\cdot}_{s}\sigma\left(r\right)d\widetilde{W}_r$ where $\widetilde{W}$ is the $\mathbb{Q}$-Brownian motion defined in \eqref{ETildeW}. Associated with $v$, we consider the continuous function
\begin{equation*}\label{vPsi}
 w^v\left(t,x\right) :=  \mathbb{E}_{\mathbb{Q}}\left(\int^{T}_{t}H\left(r, X^{t,x}_r,v\left(r,X^{t,x}_r\right),\nabla_x v\left(r,X^{t,x}_r\right)\right)dr +
 g\left(X^{t,x}_T\right)\right), \ (t,x) \in [0,T] \times \R^d.
 \end{equation*}
 We observe that $v$ fulfills \eqref{RepFormulaBis} if and only if for
 all $\left(t,x\right) \in [0,T]\times\R^d$ 
\begin{equation} \label{E436} 
v(t,x) = w^v\left(t,x\right).
\end{equation}

Indeed this follows by the freezing lemma of the conditional
expectation, the fact that $\widehat \xi_t$ is independent
of the random field $\left(X^{t,x}_s\right)_{ t\leq s\leq T,x\in\R^d}$
and the flow property
\begin{equation*}\label{EFlow}
 X^{t,\widehat{\xi}_t}_s = \widehat{\xi}_s, s \in [t,T].
\end{equation*}
\item It remains to show that \eqref{E436} is satisfied if and only if $v$ is a viscosity solution of \eqref{SLPDE}. This is the object of Lemma \ref{Abstract-Lemma} applied under the probability $\mathbb{Q}$, in particular to the equivalence between item 1. and item 3.

\end{enumerate}
\end{prooff}

\section{Representation of stochastic control problems} \label{control_problem_section}

\setcounter{equation}{0}
\noindent Let us briefly recall the link between stochastic control and non-linear
 PDEs given by the Hamilton-Jacobi-Bellman (HJB) equation. 
We refer for instance to~\cite{gozzibook, Pham09, touzibook} for more details. 
\smallbreak
\noindent Let $A \subset \R^k$ compact and denote by $\sha_0$ the set of all $A$-valued progressively measurable processes $\left(\alpha_t\right)_{t \in [0,T]}$, namely the set of \textit{admissible controls}.
\smallbreak
\noindent We consider now \textit{state processes} $(X_{t}^{s,x,\alpha})_{s\leq t \leq T, \alpha \in \sha_{0}}$ starting at time $s \in [0,T]$ with value $x \in \R^d$, solutions of the controlled SDE
\begin{equation} \label{controlled_SDE}
dX_t = b\left(t,X_t,\alpha_t\right)dt + \sigma\left(t\right)dW_t, 
\end{equation}
\noindent where $W$ is a $d$-dimensional Brownian motion and $b : [0,T]\times\R^d\times A \mapsto \R^d$ is supposed to fulfill the following.
\begin{ass} \label{control_drift_ass}
\noindent The function $b$ is continuous and there exists $K \geq 0$ such that
\begin{equation*}
\left|b\left(t,x_2,a\right) - b\left(t,x_1,a\right)\right| \leq K\left|x_2-x_1\right|, \ \left(t,x_1,x_2,a\right) \in [0,T]\times \R^d \times \R^d \times A.
\end{equation*}
\end{ass}

\noindent Note that Assumption \ref{control_drift_ass} implies $b$ to have
 linear growth in space uniformly in time and in the control.
 Consequently, \eqref{controlled_SDE} starting at time $s$ with value $x$ admits a unique solution for each $\alpha \in \sha_{0}$, for each $\left(s,x\right) \in [0,T]\times\R^d$, 
 by the same arguments as in Theorem 3.1 in \cite{touzibook}.
\smallbreak
\noindent We also introduce the \textit{cost function} $J : [0,T]\times\R^d\times\sha_{0} \rightarrow \R$ defined by 
\begin{equation} \label{cost_function}
J(s,x,\alpha) :=
\mathbb{E} \left(g(X_{T}^{s,x,\alpha})+ \int _{s}^{T} f\big (r,X_{r}^{s,x,\alpha},\alpha_r\big )dr\right), \ \left(s,x,\alpha\right) \in [0,T]\times\R^d\times\sha_{0},
\end{equation}
\noindent where the function $f : [0,T]\times\R^d\times A \mapsto \R$ (\textit{running cost}) is supposed to fulfill what follows. 
\begin{ass} \label{control_costs_ass}
\noindent The function $f$ is continuous and there exists $m,M \geq 0$ such that
\begin{equation*}
\left|f\left(t,x,a\right)\right| \leq M\left(1 + \left|x\right|^m\right), \ \left(t,x,a\right) \in [0,T]\times\R^d \times A.
\end{equation*}
\end{ass}
\noindent Supposing the validity of Assumptions \ref{control_drift_ass}
and \ref{control_costs_ass} together with  Assumption \ref{ass_g}
on the function $g : \R^d \mapsto \R$ (\textit{terminal cost}),
we are interested in minimizing, over control processes $\alpha \in \sha_{0}$ the functions $\alpha \mapsto J\left(0,x,\alpha\right)$ for every $x \in \R^d$.
\smallbreak
\noindent To tackle this finite horizon stochastic control problem, the usual approach consists in introducing the associated \textit{value} (or \textit{Bellman}) function 
 $v : [0,T] \times \mathbb{R}^{d} \rightarrow \R$ representing the minimum expected costs, starting from any time $t \in [0,T]$ at any state $x \in \R^d$, i.e.
\begin{equation}
\label{eq:prob}
v(t,x):= \displaystyle {\inf_{\alpha \in \mathcal{A}_0} }
\, 
J\left(t,x,\alpha\right), \ \left(t,x\right) \in [0,T]\times\R^d \ .
\end{equation}
\noindent Note that the terminal condition is known, which fixes $v\left(T,\cdot\right)=g$, whereas $v\left(0,\cdot\right)$ corresponds to the solution of the original minimization problem. 
\smallbreak
\begin{rem} \label{RHamilt}
Suppose the validity of Assumptions \ref{ass_g}, \ref{control_drift_ass} and \ref{control_costs_ass}.
 \begin{enumerate} 
\item The function $v$ is continuous on $[0,T] \times \mathbb{R}^{d}$  and has polynomial growth, see Theorem 5. Chapter 3. in \cite{krylov}. 
\item
 The value function $v$ is a viscosity solution of the Hamilton-Jacobi-Bellman equation
\begin{equation}
\label{eq:HJB}
\left\{
\begin{array}{lll}
\partial_t v (t,x)+H(t,x,\nabla_x v(t,x)) +\frac{1}{2} Tr[\sigma \sigma^\top(t) \nabla_x^2 v(t,x)]=0, \ \left(t,x\right) \in [0,T[\times\R^d
\\
v(T,\cdot)=g,\\ 
\end{array}
\right .
\end{equation}
\noindent where $H$ denotes the real-valued function defined on $[0,T]\times \R^d\times\R^d$  by 
\begin{equation} 
\label{eq:H1}
H(t,x,\delta):=\inf_{ a\in A} \left \{f(t,x, a)+ \left<b(t,x, a),\delta\right> \right \}, \ \left(t,x,\delta\right) \in [0,T]\times \R^d \times \R^d,
\end{equation}
\noindent see for example Theorem 7.4 in \cite{touzibook}. 
\item  By  definition, it is obvious that $(x,z) \mapsto H(t,x,z)$ has polynomial growth  uniformly
with respect to $t$. It is also clear that $H$ is continuous.
\item Under Assumptions \ref{ass_g}, \ref{control_drift_ass} and
\ref{control_costs_ass}, the PDE \eqref{eq:HJB} admits at most one viscosity solution in the class of continuous solutions with polynomial growth, see Theorem {\rm II}.3 in \cite{Uniq_HJB_Lions}.
Since $v$ has polynomial growth, the value function
 $v$ is the unique viscosity solution of \eqref{eq:HJB} in the considered class.
\end{enumerate}
\end{rem}
\noindent We formulate below another assumption for the value function $v$. 
\begin{ass} \label{optimal_control}
$v$ is of class $\shc^{0,1}\left([0,T],\R^d\right)$
such that $\nabla_x v$ has polynomial growth.
\end{ass}
\begin{rem}\label{RControl}
  Under Assumption \ref{optimal_control}, using
Remark \ref{RHamilt} 3.
that the function
$(t,x) \mapsto H^v\left(t,x\right):= H\left(t,x,\nabla_x v \left(t,x\right)\right)$ is continuous 
with polynomial growth.
\end{rem}
\begin{rem} \label{RControlBis}
\begin{enumerate}
\item Assumption \ref{optimal_control}
  is not so restrictive, since whenever $g$ and $f$ are locally
  Lipschitz with polynomial growth gradient (in space), then $v$ is locally Lipschitz in the space variable. To prove this, it suffices to show that $J$ is locally Lipschitz in $x$ uniformly in $t$ and $\alpha$. A proof of this fact is given in Lemma \ref{loc_Lip_J} stated in the Appendix. 
\smallbreak
In that context, the value function $v$ is in particular absolutely continuous and for every $t \in [0,T]$, for almost every $x \in \R^d$, $v\left(t,\cdot\right)$ is differentiable and $\nabla_x v(t, \cdot)$ exists.
\item Suppose in addition that the functions $f$, $g$ and $b$ are of class $\shc^1$ (in the space variable) and  the validity of Assumption \ref{markov_optimal_control}.
 Then  $\nabla_x v(t, \cdot)$ has polynomial
  growth as we show below.
  Indeed, by usual dominated convergence arguments, we can show that for each $\left(t,\alpha\right)\in[0,T]\times\sha_0$, $x \mapsto J\left(t,x,\alpha\right)$ is differentiable with gradient
\begin{equation}\label{obj_func_grad}
\nabla_x J\left(t,x,\alpha\right) = \mathbb{E}\left(Y^{t,x,\alpha}_T\nabla_x g\left(X^{t,x,\alpha}_T\right) + \int^{T}_{t}Y^{t,x,\alpha}_r \nabla_xf\left(r,X^{t,x,\alpha}_r,\alpha_r\right)dr\right),
\end{equation}
\noindent where $Y^{t,x,\alpha}$ is the unique matrix-valued process fulfilling
\begin{equation*}
Y^{t,x,\alpha}_r = I_d + \int^{r}_{t}\nabla_x b\left(s,X^{t,x,\alpha}_s,\alpha_s\right)Y^{t,x,\alpha}_s ds, \ r \in[t,T],
\end{equation*}
where $\nabla_x b := \left(\partial_{x_j} b^i\right)_{\left(i,j\right)\in[\![1,d]\!]^2}$.
\smallbreak
\noindent  Combining what precedes with Lemma \ref{env_thm} stated in the Appendix, we deduce that for all $t \in [0,T]$, for almost every $x \in \R^d$
\begin{equation} \label{val_func_grad}
\nabla_x v \left(t,x\right) = \nabla_x J\left(t,x,\alpha^*\left(t,x\right)\right),
\end{equation}
\noindent where $\alpha^*$ is the Borel function introduced in Assumption \ref{markov_optimal_control}. In view of \eqref{obj_func_grad} and \eqref{val_func_grad}, $\nabla_xv$ has polynomial growth.
\end{enumerate}
\end{rem}

\begin{corro} \label{RepFormulaControl}
Let $\nu$ be a Gaussian probability measure
fulfilling Assumption \ref{ass_nu} with associated functions
$m^\nu$ and $Q^\nu$.
We suppose the validity of Assumptions \ref{ass_g}, \ref{control_drift_ass}, 
\ref{control_costs_ass}.
Among the functions $v:[0,T] \times \R^d \rightarrow \R$
fulfilling Assumption \ref{optimal_control},
the value function is the unique one which is solution of
\eqref{RepFormula}.
(In this framework $H$ only depends on $\nabla_x v$ and
not on $v$).
\end{corro}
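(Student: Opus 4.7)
The plan is to deduce Corollary \ref{RepFormulaControl} directly from Theorem \ref{SLPDE-Rep}, applied to the Hamilton--Jacobi--Bellman PDE \eqref{eq:HJB}, combined with the uniqueness of viscosity solutions of \eqref{eq:HJB} recalled in Remark \ref{RHamilt} 4. The key point is that the Hamiltonian $H(t,x,\delta)$ defined in \eqref{eq:H1} depends only on $(t,x,\nabla_x v)$ and not on $v$, so Theorem \ref{SLPDE-Rep} applies in its version where the first ``$v$-argument'' of $H$ is absent. I will split the argument into an existence part and a uniqueness part.

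For the existence part, I would check that the value function $v$ fulfills all the hypotheses of Theorem \ref{SLPDE-Rep}. Under Assumption \ref{optimal_control}, $v$ is of class $\shc^{0,1}([0,T],\R^d)$ with $\nabla_x v$ of polynomial growth. Continuity of $v$ on the compact set $[0,T]\times\{0\}$, together with the elementary bound
\begin{equation*}
|v(t,x)| \leq |v(t,0)| + \int_0^1 |\nabla_x v(t,sx)|\,|x|\,ds,
\end{equation*}
yields polynomial growth for $v$ itself. By Remark \ref{RControl}, the map $H^v : (t,x) \mapsto H(t,x,\nabla_x v(t,x))$ is then continuous with polynomial growth, and by Remark \ref{RHamilt} 2., $v$ is a viscosity solution of \eqref{eq:HJB}. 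All hypotheses of Theorem \ref{SLPDE-Rep} being met, it delivers the representation \eqref{RepFormula} for $v$.

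For the uniqueness part, let $\tilde v$ be any function satisfying Assumption \ref{optimal_control} which solves \eqref{RepFormula}. The same polynomial growth argument as above applies to $\tilde v$, and Remark \ref{RControl} ensures $H^{\tilde v}$ is continuous with polynomial growth. Applying Theorem \ref{SLPDE-Rep} in the reverse direction, $\tilde v$ is a viscosity solution of \eqref{eq:HJB}. By Remark \ref{RHamilt} 4., the PDE \eqref{eq:HJB} admits at most one continuous viscosity solution with polynomial growth; since both $v$ and $\tilde v$ lie in that class, we conclude $\tilde v = v$.

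The only subtle point is really the passage from ``$\nabla_x v$ has polynomial growth'' (as given by Assumption \ref{optimal_control}) to ``$v$ itself has polynomial growth'' (as required by Theorem \ref{SLPDE-Rep}); once this is observed, the corollary reduces to concatenating the representation result of Theorem \ref{SLPDE-Rep} with the HJB uniqueness result recalled in Remark \ref{RHamilt} 4.
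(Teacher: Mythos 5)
Your proposal is correct and follows essentially the same route as the paper: the direct implication of Theorem \ref{SLPDE-Rep} combined with Remark \ref{RHamilt} gives the representation for the value function, and the converse implication plus the uniqueness of viscosity solutions of \eqref{eq:HJB} (Remark \ref{RHamilt} 4.) gives uniqueness. The only difference is that you explicitly verify that any $v$ satisfying Assumption \ref{optimal_control} has polynomial growth (deducing it from the polynomial growth of $\nabla_x v$ and continuity of $t \mapsto v(t,0)$), a hypothesis of Theorem \ref{SLPDE-Rep} that the paper's proof leaves implicit — a worthwhile clarification.
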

\begin{proof} 
\
\noindent We recall that $H^v$ has polynomial growth by Remark 
\ref{RControl} 1.
Otherwise, on the one hand, by Remark \ref{RHamilt} and 
the direct implication in Theorem \ref{SLPDE-Rep}, 
  $v$ fulfills \eqref{RepFormula}.
 \noindent On the other hand, if a function  $v$ fulfills \eqref{RepFormula}
then, by the converse implication of Theorem \ref{SLPDE-Rep}
$v$ is a viscosity solution of \eqref{eq:HJB}.
By Remark \ref{RHamilt} 3., $v$ can only be the value function.

\end{proof}
\noindent We introduce a supplementary hypothesis on the value function $v$.
\begin{ass} \label{markov_optimal_control}
There exists a Borel function $\alpha^* : [0,T]\times\R^d \rightarrow A$ such that
\begin{equation*}
H\left(t,x,\nabla_x v\left(t,x\right)\right) = \left<b\left(t,x,\alpha^*\left(t,x\right)\right),\nabla_x v\left(t,x\right)\right> + f\left(t,x,\alpha^*\left(t,x\right)\right), \ \left(t,x\right) \in [0,T]\times\R^d.
\end{equation*}

\end{ass}
\noindent We state (and show below)  a \textit{verification} type result involving $\alpha^*$ without any further regularity assumptions on the value function.
That result is somehow classical, but it is not obvious 
to find it in the literature (see e.g. Chapter 5 of \cite{touzibook} or
\cite{gr1}),  with our assumptions.
So, for the consistency
of the paper we provide a proof. 
Note to begin that the Borel function $b^* : \left(t,x\right) \rightarrow b\left(t,x,\alpha^*\left(t,x\right)\right)$ has linear growth thanks to Assumption \ref{control_drift_ass}. As a consequence, the \textit{closed loop}
 equation
\begin{equation} \label{closed-loop-SDE}
d\bar{X}_t = b^*\left(t,\bar{X}_t\right)dt + \sigma\left(t\right)dW_t,
\end{equation}
\noindent admits a unique strong solution $\bar{X}^{x}$ starting at time $0$ with value $x$, for each $x \in \R^d$, see Theorem 6 in \cite{Veretennikov1982}.
\smallbreak
\begin{prop} \label{verif}
Suppose 
the validity of Assumptions \ref{ass_g},
\ref{control_drift_ass}, \ref{control_costs_ass}.
Let $v$ be the value function defined in \eqref{eq:prob}
supposed to be of class $\shc^{0,1}$ such that
$\left(t,x\right) \mapsto H\left(t,x,\nabla_x v\left(t,x\right)\right)$ has polynomial growth.

\noindent
 Then, the Borel function $\alpha^*$ introduced in Assumption \ref{markov_optimal_control} defines an optimal feedback function for the considered control problem in the sense that for each $x \in \R^d$,
\begin{equation} \label{optimal_alpha}
v\left(0,x\right) = J\left(0,x,\alpha^*\left(\cdot,\bar{X}^x\right)\right).
\end{equation}
\end{prop}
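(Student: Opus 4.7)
The plan is to proceed by a classical verification argument based on It\^o's formula, after first upgrading the regularity of the value function $v$ from $\shc^{0,1}$ to $\shc^{1,2}$ on $[0,T[\,\times\R^d$. This upgrade is the key preparatory step: by Remark~\ref{RHamilt}~2.\ the value function $v$ is a viscosity solution of the HJB equation \eqref{eq:HJB}, and the hypotheses of the proposition together with Assumption~\ref{ass_g} (continuity plus polynomial growth of $v$, of $H^v(t,x)=H(t,x,\nabla_x v(t,x))$ and of $g$) are exactly those required to invoke Lemma~\ref{Abstract-Lemma}, whose item~3.\ then yields $v\in\shc^{1,2}([0,T[,\R^d)$ and the HJB equation pointwise.

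Next I would rewrite the HJB equation by using Assumption~\ref{markov_optimal_control}: since $H(t,x,\nabla_x v(t,x))=\langle b^*(t,x),\nabla_x v(t,x)\rangle+f(t,x,\alpha^*(t,x))$ with $b^*(t,x):=b(t,x,\alpha^*(t,x))$, the HJB reads
\begin{equation*}
\partial_t v(t,x)+\tfrac{1}{2}Tr\!\left(\Sigma(t)\nabla^2_x v(t,x)\right)+\langle b^*(t,x),\nabla_x v(t,x)\rangle+f(t,x,\alpha^*(t,x))=0.
\end{equation*}
Fixing $x\in\R^d$ and applying It\^o's formula to $v(t,\bar X^x_t)$ on $[0,t]$ for $t<T$ (localizing via $\tau_n:=\inf\{s\ge 0:|\bar X^x_s|\ge n\}\wedge t$ to ensure that the stochastic integral is a true martingale before passing to the limit), the drift term collapses to $-f(s,\bar X^x_s,\alpha^*(s,\bar X^x_s))$, giving
\begin{equation*}
\mathbb E\bigl[v(t\wedge\tau_n,\bar X^x_{t\wedge\tau_n})\bigr]=v(0,x)-\mathbb E\!\left[\int_0^{t\wedge\tau_n} f(s,\bar X^x_s,\alpha^*(s,\bar X^x_s))\,ds\right].
\end{equation*}

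The final step is to take $n\to\infty$ and then $t\to T$. Since $b^*$ has linear growth and $\sigma$ is bounded, standard estimates yield $\mathbb E[\sup_{s\le T}|\bar X^x_s|^p]<\infty$ for every $p\ge 1$, so that $\tau_n\to T$ almost surely and the stopped processes have uniformly integrable moments. Combined with the polynomial growth of $v$, $g$ and $f$ (and Assumption~\ref{control_costs_ass}), dominated convergence gives $\mathbb E[v(t\wedge\tau_n,\bar X^x_{t\wedge\tau_n})]\to\mathbb E[v(t,\bar X^x_t)]$; then the continuity of $v$ up to $T$ together with $v(T,\cdot)=g$ lets me pass $t\uparrow T$ to obtain $\mathbb E[g(\bar X^x_T)]=v(0,x)-\mathbb E\!\int_0^T f(s,\bar X^x_s,\alpha^*(s,\bar X^x_s))\,ds$, which is exactly \eqref{optimal_alpha}. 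The reverse inequality $v(0,x)\le J(0,x,\alpha^*(\cdot,\bar X^x))$ is free from the definition of $v$ as an infimum over $\sha_0$, so equality follows.

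The main obstacle is the justification of the limits in the localization argument, since under the proposition's hypotheses I only have polynomial growth of $v$ and $H^v$ but not, a priori, of $\nabla_x v$ or $\nabla^2_x v$. This is handled by keeping the Hessian only inside the drift (which is controlled by the HJB equation and so equals the continuous function $-f(\cdot,\cdot,\alpha^*)-\langle b^*,\nabla_x v\rangle-\partial_t v$) and by bounding the stochastic integral through the stopping times $\tau_n$, rather than by requiring integrability of $\nabla_x v^{\top}\sigma$ directly; the polynomial growth of $v$ itself is then enough to dominate $v(t\wedge\tau_n,\bar X^x_{t\wedge\tau_n})$ uniformly in $n$.
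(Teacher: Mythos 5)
Your proposal is correct and follows essentially the same route as the paper: upgrade $v$ to $\shc^{1,2}([0,T[,\R^d)$ via Lemma \ref{Abstract-Lemma}, rewrite the HJB drift with Assumption \ref{markov_optimal_control}, apply It\^o's formula to $v(\cdot,\bar X^x)$, and pass to the limit using moment estimates and polynomial growth — your localization by the stopping times $\tau_n$ is merely a technical variant of the paper's argument that the identity \eqref{Eq_T0} itself forces the local martingale $M$ to extend to a true martingale on $[0,T]$. The only point you should still make explicit (as the paper does) is the last identification: the process $\alpha^*_t=\alpha^*(t,\bar X^x_t)$ belongs to $\sha_0$ and, by pathwise uniqueness for \eqref{controlled_SDE}, $X^{0,x,\alpha^*}=\bar X^x$, so that $\mathbb{E}\bigl(g(\bar X^x_T)+\int_0^T f(s,\bar X^x_s,\alpha^*(s,\bar X^x_s))\,ds\bigr)$ is indeed $J\left(0,x,\alpha^*\left(\cdot,\bar X^x\right)\right)$.
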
 

\begin{prooff} \ (of Proposition \ref{verif}).  Let $x \in \R^d$.

\begin{enumerate} 
\item 
 $v$ is a continuous viscosity solution with polynomial growth 
of \eqref{eq:HJB} and so of \eqref{SLPDE}, with $\left(t,x,y,z\right) \mapsto H\left(t,x,z\right)$ for the non linearity. 
By Remark \ref{RHamilt} we know
that $H^v$ is continuous and by assumption it has
  polynomial growth.
So we apply Lemma \ref{Abstract-Lemma} to deduce that $v$ is of class
 $\shc^{1,2}\left([0,T[,\R^d\right)$ and is a classical solution of \eqref{eq:HJB}.
\item Applying It\^{o}'s formula to $v\left(\cdot, \bar{X}^{x}\right)$ between $0$ and $T_0 \in [0,T[$ and using the fact $v$ is a classical solution of \eqref{eq:HJB} combined with Assumption \ref{markov_optimal_control}, we obtain 
\begin{equation} \label{Eq_T0}
v\left(0,x\right) = v\left(T_0,\bar{X}^{x}_{T_0}\right) + \int^{T_0}_{0}f\left(r,\bar{X}^{x}_r,\alpha^*\left(r,\bar{X}^x_r\right)\right)dr - M_{T_0},
\end{equation}
where 
$$ M_t = \int^{t}_{0} \nabla_x v\left(r,\bar{X}^{x}_r\right)^\top
\sigma\left(r\right) dW_r, \ t \in [0,T[.$$
By the usual BDG (Burkholder-Davies-Gundy) and Jensen's arguments, $ \sup_{t \in [0,T]} \vert  \bar X^x_t \vert$ has all its moments. 
So, \eqref{Eq_T0} implies that the
local martingale $M$ extends continuously  to a true martingale on $[0,T]$
 still denoted by $M$ verifying
$\sup_{t\in[0,T]} \vert M_t\vert \in L^1.$
Indeed $v$ is continuous on $[0,T]\times\R^d$ 
and $v$ (resp. $f$) has polynomial growth in space
(resp. in the second and third variable).
Therefore $M$ is a true martingale.
 Sending $T_0$ to $T$, \eqref{Eq_T0} holds with $T_0$ replaced by $T$ and $v\left(T_0,\bar X^{x}_{T_0}\right)$ replaced by $g\left(\bar{X}^{x}_T\right)$.
Taking the expectation, we obtain
\begin{equation} \label{interm_opt}
v\left(0,x\right) = \mathbb{E}\left(g\left(\bar{X}^{x}_{T}\right) + \int^{T}_{0}f\left(r,\bar{X}^{x}_r,\alpha^*\left(r,\bar{X}^x_r\right)\right)dr \right).
\end{equation}
\item 
The process $\alpha^*_t:= \alpha^*\left(t,\bar{X}^{x}_t\right), \ t \in [0,T]$, 
 belongs to the set $\sha_0$ of admissible controls
and $X = \bar{X}^{x}$, is a solution of \eqref{controlled_SDE}.
Invoking pathwise uniqueness for \eqref{controlled_SDE}, we obtain
 $X^{0,x,\alpha^*}$ coincides with $\bar{X}^x$.
 Then, \eqref{interm_opt} implies \eqref{optimal_alpha}.
\end{enumerate}
\smallbreak
\end{prooff}
\noindent We formulate now a corollary in which is given a representation formula for the value function $v$ involving the optimal feedback function $\alpha^*$.
\begin{corro} \label{HJB_corro}
Let $\nu$ be a Gaussian probability measure
fulfilling Assumption \ref{ass_nu} with associated functions
$m^\nu$ and $Q^\nu$.
We suppose the validity of Assumptions \ref{ass_g}, \ref{control_drift_ass}, 
\ref{control_costs_ass}.
Among the functions fulfilling Assumptions \ref{optimal_control} and \ref{markov_optimal_control},
the value function $v$ is the unique one which is solution of
\begin{equation} \label{RepFormulaHJB}
\left\{
\begin{array}{lll}
 \displaystyle \xi_t = \xi_0 - \int^{t}_{0}\widetilde{b}\left(T-s,\xi_s\right) + b_c\left(T-s,\xi_s,m^\nu\left(T-s\right),Q^\nu\left(T-s\right)\right)ds + \int^{t}_{0}\sigma\left(T-s\right)d\beta_s, \\
\xi_0 \sim \nu, \\
\displaystyle v\left(t,\widehat{\xi}_t\right) = \mathbb{E}\left(\int^{T}_{t}f\left(s,\widehat{\xi}_s,\alpha^*\left(s,\widehat{\xi}_s\right)\right) - \left<\widetilde{b}\left(s,\widehat{\xi}_s\right) - b^*\left(s,\widehat{\xi}_s\right),\nabla_x v\left(s,\widehat{\xi}_s\right)\right>ds + g\left(\widehat{\xi}_T\right)\middle|\widehat{\xi}_t\right),
\end{array}
\right.
\end{equation}
\noindent for all $t \in [0,T]$.
\end{corro}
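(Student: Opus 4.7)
The plan is to recognize that the representation \eqref{RepFormulaHJB} is an algebraic rewriting of \eqref{RepFormula} once Assumption \ref{markov_optimal_control} is used to express the Hamiltonian along $\nabla_x v$. Since the value function has already been characterized as the unique solution of \eqref{RepFormula} in Corollary \ref{RepFormulaControl}, the conclusion will follow immediately once this equivalence is made precise.

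First I would fix any function $v$ satisfying Assumptions \ref{optimal_control} and \ref{markov_optimal_control}. Here $H$ only depends on $(t,x,\delta)$ (see \eqref{eq:H1}), not on the $y$-variable, and by Remark \ref{RControl} the map $(t,x)\mapsto H^v(t,x)=H(t,x,\nabla_x v(t,x))$ is continuous with polynomial growth. Invoking Assumption \ref{markov_optimal_control} and the definition $b^*(t,x)=b(t,x,\alpha^*(t,x))$, I would rewrite, for every $(s,x)\in[0,T]\times\R^d$,
\begin{equation*}
H(s,x,\nabla_x v(s,x)) - \langle \widetilde{b}(s,x),\nabla_x v(s,x)\rangle
= f(s,x,\alpha^*(s,x)) - \langle \widetilde{b}(s,x)-b^*(s,x),\nabla_x v(s,x)\rangle .
\end{equation*}
Substituting this identity inside the conditional expectation shows that the integrands of \eqref{RepFormula} and \eqref{RepFormulaHJB} coincide pathwise along $\widehat{\xi}$; thus, within the class of functions fulfilling \ref{optimal_control} and \ref{markov_optimal_control}, the two representations are equivalent.

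For the direct statement, the value function $v$ is by hypothesis in that class, and by Remark \ref{RHamilt} it is a continuous viscosity solution with polynomial growth of \eqref{eq:HJB}. Corollary \ref{RepFormulaControl} then yields that $v$ satisfies \eqref{RepFormula}, and by the equivalence above it satisfies \eqref{RepFormulaHJB}. For uniqueness, let $v$ be any function fulfilling Assumptions \ref{optimal_control}, \ref{markov_optimal_control} and the representation \eqref{RepFormulaHJB}. The same equivalence forces $v$ to satisfy \eqref{RepFormula}, and the uniqueness part of Corollary \ref{RepFormulaControl} then identifies $v$ with the value function.

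I do not expect any serious obstacle: all the analytic work (time-reversal, Girsanov transformation, link with the Feynman--Kac representation of the associated linear PDE) has been carried out in Theorem \ref{SLPDE-Rep} and Lemma \ref{Abstract-Lemma}, while Assumption \ref{markov_optimal_control} supplies precisely the pointwise identity needed to specialize the general semilinear representation to the HJB setting. The only minor point to double-check is that $H^v$ inherits continuity and polynomial growth so that Theorem \ref{SLPDE-Rep} is applicable, and this is granted by Remark \ref{RControl}.
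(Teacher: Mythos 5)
Your proof is correct and follows essentially the same route as the paper: the paper also deduces the corollary directly from Corollary \ref{RepFormulaControl} by substituting the expression of $H$ along $\nabla_x v$ given by Assumption \ref{markov_optimal_control}, which is exactly the pointwise identity you spell out. Your version simply makes explicit the equivalence of the two integrands and the two directions (existence and uniqueness), which the paper leaves implicit.
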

\begin{proof} 
\noindent The result is a direct consequence of Corollary \ref{RepFormulaControl}, replacing the function $H$ by its expression given in Assumption \ref{markov_optimal_control}.
\end{proof}



\section{A heuristic algorithm}

\label{S5}

\setcounter{equation}{0}

\smallbreak

In this section, we propose a heuristic algorithm to solve the
control problem described in Section \ref{control_problem_section}.
In what follows, the \textit{terminal cost} function $g$ is supposed to
belong to $\shc^1\left(\R^d\right)$.

\noindent
Consider a  regular time grid with time step $\delta t := \frac{T}{n}$ and grid instants $t_k=k\delta t$ for any $k \in [\![0,n]\!]$. 
For $k=n-1,n-2,\cdots ,0$, select arbitrarily $\bar m_{k+1}\,, c_{k+1} \in \R^d$ and $\bar Q_{k+1}\in S^+_d(\R), a_{k+1}\in M_d(\R)$ such that  $
Q_k(t_k) := e^{-a_{k+1}\delta t} \bar{Q}_{k+1} e^{-a^\top_{k+1}\delta t} - 
{\displaystyle \int^{t_{k+1}}_{t_k}e^{-a_{k+1}\left(s-t_k\right)}\Sigma\left(s\right)e^{-a^\top_{k+1}\left(s-t_k\right)}ds}\in S^+_d(\R)\,.$
By Corollary~\ref{HJB_corro}, applied substituting $[0,T]$ with
$[t_k,t_{k+1}]$,
the solution of~\eqref{eq:HJB}
on $[t_k,t_{k+1}]$, with terminal condition $v(t_{k+1},\cdot)$, can be represented for $t\in [t_k,t_{k+1}]$ by
\begin{equation}
\label{eq:discrete}
\left\{
\begin{array}{lll}
\bar \xi_{k+1}&\sim &\mathcal{N}(\bar m_{k+1},\bar Q_{k+1})\\
 Y_{k+1}&=&v(t_{k+1},\bar\xi_{k+1})\\
  m_k(t) &=& e^{-a_{k+1}\left(t_{k+1}-t\right)}\bar m_{k+1} - \left({\displaystyle \int^{t_{k+1}}_{t} e^{-a_{k+1}(s-t)}ds}\right) c_{k+1}
  \\
Q_k(t) &=& e^{-a_{k+1}\left(t_{k+1}-t\right)} \bar{Q}_{k+1} e^{-a^\top_{k+1}\left(t_{k+1}-t\right)} - 
{\displaystyle \int^{t_{k+1}}_{t}e^{-a_{k+1}\left(s-t\right)}\Sigma\left(s\right)e^{-a^\top_{k+1}\left(s-t\right)}ds}\\
\xi_{k,T-t}&=&\bar \xi_{k+1}-{\displaystyle \int_{t_{n-(k+1)}}^{T-t} \big (a_{k+1}\xi_{k,s}+c_{k+1}+b_c(T-s,\xi_{k,s},m_k(T-s),Q_k(T-s))\big )\,ds} \\
&&+{\displaystyle \int_{t_{n-(k+1)}}^{T-t}\sigma (T-s)d\beta_s}\\
\hat \xi_{k,t}&=&\xi_{k,T-t}\\
v(t,\hat \xi_{k,t})&=&{\displaystyle \E\left (\int_{t}^{t_{k+1}} F_k\big (s,\hat \xi_{k,s},\nabla_xv(s,\hat \xi_{k,s})\big) ds+ Y_{{k+1}}\middle|\hat\xi_{k,t}\right )}\ .
\end{array}
\right .
\end{equation}
In the above recursion, $\beta$ denotes a $d$-dimensional Brownian motion on $[0,T]$; for any $k\in [\![0,n-1]\!]$, $(\xi_{k,t})_t$ is a $d$-dimensional process defined on $[t_{n-(k+1)}, t_{n-k}]$ while $(\hat \xi_{k,t})_t$ denotes the 
associated time reversal defined on $[t_k,t_{k+1}]$; the driver $F_k$ defined on $[t_{k},t_{k+1}]\times \R^d\times \R^d$ is such that, 
\begin{equation}
\label{eq:Fk}
F_k(t,x,\delta):=H (t,x,\delta) -\langle a_{k+1}x+c_{k+1},\delta\rangle
=
\min_{a\in A} \left \{f(t, x, a)+ \langle b(t, x, a), \delta\rangle \right \}
-\langle a_{k+1}x+c_{k+1},\delta\rangle\,.
\end{equation}
The idea now is to apply a classical numerical method based on linear regressions to approximate the solution to~\eqref{eq:discrete} recursively in time from $k=n-1$ to $k=0$. 
For each time instant $k$, select arbitrarily $\bar m_{k+1}\,, c_{k+1} \in \R^d$ and $\bar Q_{k+1}\in S^+_d(\R), a_{k+1}\in M_d(\R)$ such that  
\begin{equation}
\label{eq:Q}
Q_k = e^{-a_{k+1}\delta t} \bar{Q}_{k+1} e^{-a^\top_{k+1}\delta t} - \Sigma (t_{k+1})\delta t\in S^+_d(\R)\,.
\end{equation}
Then we propose to approximate $v(t_k,\cdot)$ by $v_k$ obtained by an explicit time discretization scheme of~\eqref{eq:discrete} with time step $\delta t=\frac{T}{n}$ as follows.
\begin{equation}
\label{eq:discreteb}
\left\{
\begin{array}{lll}
\bar \xi_{k+1}&\sim &\mathcal{N}(\bar m_{k+1},\bar Q_{k+1})\\
 Y_{k+1}&=&v_{k+1}(\bar\xi_{k+1})\\
 \xi_{k}&=&\bar \xi_{k+1}-{\displaystyle  \big (a_{k+1}\bar \xi_{k+1}+c_{k+1}+b_c(t_{k+1},\bar \xi_{k+1},\bar m_{k+1},\bar Q_{k+1})\big )\,\delta t} +{\displaystyle \sigma (t_{k+1})\sqrt{\delta t}\varepsilon_k}\\
v_k( \xi_{k})&=&{\displaystyle \E\left (F_k\big (t_{k+1},\bar \xi_{k+1},\nabla_xv_{k+1}(\bar\xi_{k+1})\big) \delta t+ Y_{{k+1}}\middle | \xi_{k}\right )}\ ,
\end{array}
\right .
\end{equation}
where $(\varepsilon_k)_{0\leq k \leq n-1}$ are i.i.d. $d$-dimensional standard Gaussian variables. 
As in the classical literature,
see e.g. \cite{GobetWarin},
we propose to approximate the conditional expectation appearing in~\eqref{eq:discreteb} using Monte-Carlo least squares regression based
on a grid constituted by $N$ independent simulations  $(\xi^i_{k},\bar \xi^i_{k+1})_{1\leq i\leq N}$ for $k\in [\![0,n-1]\!]$.
In that literature,
one generally simulates forwardly that grid.

\noindent
The interest of such \textit{fully backward} representations~\eqref{eq:discrete}-\eqref{eq:discreteb}, where the grid $(\xi^i_{k},\bar \xi^i_{k+1})_{1\leq i\leq N}$  is defined backwardly in time, (like the value function), is twofold.
\begin{itemize}
\item In terms of computer memory: at each time instant $k+1$, the values of the grid are generated on the fly,  $(\xi^i_{k},\bar \xi^i_{k+1})_{1\leq i\leq N}$. Contrary to the standard approach, there is no need to store the whole grid over the whole set of grid instants $k\in [\![ 0,n-1]\!]$. 
\item In terms of the relevance of the grid: at each grid instant, $k+1$ the information acquired on the value function $v(t_{k+1},\cdot)$ and optimal control
  strategy $\alpha^*(t_{k+1},\cdot)$ can be used to adaptively optimize the grid parameter $(a_{k+1},c_{k+1},\bar m_{k+1}, \bar Q_{k+1})$ in order to explore relevant regions of the state space.
\end{itemize}
 We develop some arguments to justify the relevance mentioned above.
 Indeed, as already announced, the target idea is to generate the grid used for regression computations according to the optimally controlled process dynamics. If this were possible, the sensitivity of the driver $F_k$ w.r.t.
 the third variable $\nabla_x v$ would vanish.
 In fact the driver sensitivity w.r.t. $\nabla_x v$ is known to be one
 major cause of the propagation of numerical errors in approximation
 schemes, see e.g.~\cite{gobet16}.
Replacing $\nabla_xv_{k+1}(\bar\xi_{k+1})$ 
by a perturbation $ \nabla_xv_{k+1}(\bar\xi_{k+1})+h$ in the
last equation of \eqref{eq:discreteb}
we obtain 
$$v^h_k(\xi_k) :={\displaystyle \E\Big (F_k\big (t_{k+1},
   \bar \xi_{k+1},\nabla_xv_{k+1}(\bar\xi_{k+1})+h\big) \delta t+ Y_{{k+1}}\,\vert \,
   \xi_{k}\Big )}. $$
The impact on $v_k(\xi_k)$
 can crudely be evaluated by computing the error 
$\E[\vert v^h_k( \xi_k)-v_k(\xi_k)\vert^2].$
Supposing  that no perturbation is impacting $Y_{k+1}$,
fact which will be heuristically justified  in Remark \ref{rem:step5} 1.,
 we have
\begin{eqnarray*}
\E(\vert v^h_k( \xi_k)-v_k(\xi_k)\vert^2)&\le&
\E\Big (\big \vert F_k\big (t_{k+1},\bar \xi_{k+1},\nabla_x v_{k+1}(\bar \xi_{k+1})\big) -F_k\big (t_{k+1},\bar \xi_{k+1},\nabla_x v_{k+1}(\bar \xi_{k+1})+h\big) \,\big \vert^2\Big ).
\end{eqnarray*}

\noindent Suppose from now on the existence of a Borel function
$(t,x,\delta) \mapsto a^*(t,x,\delta),$ such that
\begin{equation} \label{a*}
H(t,x,\delta):= \left \{f(t,x, a^*(t,x,\delta))+ \left<b(t,x, 
a^*(t,x,\delta)),\delta\right> \right \}, \ \left(t,x,\delta\right) \in [0,T]\times \R^d \times \R^d.
\end{equation}
In this case one has  
$ \alpha^*(t,x) = a^*(t,x,\nabla_x v(t,x)), \ (t,x) \in  [0,T]\times \R^d \times \R^d, $ 
where $\alpha^*$ was defined in Assumption \ref{markov_optimal_control}.
Coming back to \eqref{eq:Fk} we get
\begin{equation}
\label{eq:FkBis}
F_k(t,x,\delta):=H (t,x,\delta) -\langle a_{k+1}x+c_{k+1},\delta\rangle
=
 \left \{f(t, x, a^*(t,x,\delta))+ \langle b(t, x, a^*(t,x,\delta)),
 \delta\rangle \right \}
-\langle a_{k+1}x+c_{k+1},\delta\rangle.
\end{equation}

\noindent
A suitable  application
 of the envelope theorem gives
\begin{equation}
\label{eq:partialF}
\frac{\partial F_k}{\partial \delta}(t,x,\delta)= b(t,x,a^*(t,x,\delta))-(a_{k+1}x+c_{k+1}) \,,
\end{equation}
which yields
\begin{eqnarray}
\label{eq:error}
\E\left(\vert v^h_k( \xi_k)-v_k(\xi_k)\vert^2\right)&\le&
\E\Big \vert \langle  
 \int_0^1 
 \frac{\partial F_k}{\partial \delta}(t_{k+1},\bar \xi_{k+1},\nabla_x v_{k+1}(\bar \xi_{k+1})+\theta h) d\theta\,,\, h\rangle \Big \vert^2\nonumber \\
&=&
\E\Big \vert \langle \int_0^1 
 b\big (t_{k+1},\bar \xi_{k+1},a^*(t_{k+1},\bar \xi_{k+1},\nabla_x v_{k+1}(\bar \xi_{k+1})+\theta h)\big ) d\theta -(a_{k+1}\bar \xi_{k+1}+c_{k+1})
\,,\, h
\rangle\Big \vert^2
\nonumber \\
&\leq& \vert h\vert^2 \E\Big \vert \int_0^1  b\big (t_{k+1},\bar \xi_{k+1},a^*(t_{k+1},\bar \xi_{k+1},\nabla_x v_{k+1}(\bar \xi_{k+1})+\theta h)\big ) d\theta -(a_{k+1}\bar \xi_{k+1}+c_{k+1})\Big \vert^2\, .\nonumber
\end{eqnarray}
The above relation highlights the fact that the original idea consisting in 
generating the grid according to a dynamics approaching the optimally controlled process dynamics reduces
the propagation of the error induced by the Monte-Carlo regression 
scheme in terms of least square criteria.

\begin{rem}\label{RFBSDEs}
The above relation also shows that  
 previous idea can be read in the more general perspective of 
the probabilistic representation of a  solution $v$ to a semilinear PDE
  of the type
\eqref{eq:PDE_Intro},
 via an FBSDE. In that general context,  one expects the selected
 drift of the
forward process in the FBSDE 
 to reduce the impact
of the sensitivity of the FBSDE driver 
 with respect to $\nabla_x v$.
\end{rem}




\noindent
Based on that observation, we propose a heuristic algorithm where parameters $(a_{k+1},c_{k+1})$ are adaptively chosen as 
\begin{equation}
\label{eq:ac}
(a_{k+1},c_{k+1})\in \textrm{arg}\min_{a,c}\E\Big \vert b\big (t_{k+1},\bar\xi_{k+1},a^*(t_{k+1},\bar \xi_{k+1},\nabla_xv_{k+1}(\bar\xi_{k+1})\big )-(a \bar\xi_{k+1}+c)\Big \vert^2\,.
\end{equation}
%
\begin{algorithm}
\caption{Fully Backward Monte-Carlo Regression scheme}
\textbf{Initialization} 
Set $v_n=g$; $k=n-1$; select arbitrarily $(\bar m_n,\bar Q_n)\in \R^d\times S^+_d\left(\R\right)$; generate $( \xi^i_n)_{1\leq i\leq N}$ i.i.d. $\sim\,\mathcal{N}(\bar m_n,\bar Q_n)$; set $Y^i_n=g( \xi^i_n)$, for all  $i \in [\![1,N]\!]$. \\
\textbf{while $k\geq 0$ do} 
\begin{enumerate}
\item ${\alpha^i}_{k+1} ={\displaystyle  \underset{a \in A}{\arg\min} \left \{f\left(t_{k+1}, \xi^i_{k+1}, a\right)+ \left<b\left(t_{k}, \xi^i_{k+1}, a\right), \nabla _x  v_{k+1}\left( \xi^i_{k+1}\right)\right>\right \}}$, for all  $i \in [\![1,N]\!].$
\item $\left({a}_{k+1},{c}_{k+1}\right) = \underset{\left(a,c\right) \in M_d\left(\R\right)\times \R^d}{\arg\min} \frac{1}{N}\sum_{i=1}^N\left|a\xi^i_{k+1} + c - b\left(t_{k+1},\xi^i_{k+1},{\alpha}^i_{k+1}\right)\right|^2.$
\item $\bar{m}_{k} = e^{-{a}_{k+1}\delta t} \bar{m}_{k+1} - c_{k+1}\delta t.$
\item ${Q}_{k} = e^{-{a}_{k+1}\delta t}\bar{Q}_{k+1}e^{-{a}^\top_{k+1}\delta t} - \Sigma\left(t_{k+1}\right)\delta t.$
\begin{itemize} 
\item \textbf{If} $\bf{Q_k\in S_d^+\left(\R\right)}$\textbf{:} set $\bar Q_k=Q_k$,  
\item \textbf{Else\hspace{1.1cm}}\textbf{:} set $\bar Q_k=Proj_{S^+_d\left(\R\right)}(Q_k)$; recompute $\bar Q_{k+1}=e^{{a}_{k+1}\delta t}\big (\bar Q_k+ \Sigma(t_{k+1})\delta t\big) e^{{a}^\top_{k+1}\delta t}$; regenerate $( \xi^i_{k+1})_{1\leq i\leq N}\ \textrm{i.i.d.}\ \sim\,\mathcal{N}(\bar m_{k+1},\bar Q_{k+1})$; set $Y^i_{k+1}=v_{k+1}( \xi^i_{k+1})$, for all  $i \in [\![1,N]\!].$
\end{itemize}
\item  
Set ${e}^i_{k+1} = {a}_{k+1}{\xi}^i_{k+1} + {c}_{k+1} - b\left(t_{k+1}, \xi^i_{k+1}, {\alpha}^i_{k+1}\right)$, for all  $i \in [\![1,N]\!].$
\item ${ \xi}^i_{k} = {\xi}^i_{k+1} - \left({a}_{k+1}{\xi}^i_{k+1} + {c}_{k+1} + b_c\left(t_{k+1},{\xi}^i_{k+1},\bar{m}_{k+1},\bar{Q}_{k+1}\right)\right) \delta t + \sigma\left(t_{k+1}\right)\varepsilon^i_{k}\sqrt{\delta t}$, for all  $i \in [\![1,N]\!]$
\item ${v}_k = \underset{P \in P_p\left(\R^d\right)}{\arg\min}\frac{1}{N}\sum_{i=1}^N \left|{Y}^i_{k+1}+ \left(f\left(t_{k+1}, \xi^i_{k+1}, {\alpha}^i_{k+1}\right) - \left<{e}^i_{k+1}, \nabla_x  v_{k+1}\left( \xi^i_{k+1}\right)\right>\right) \delta t - P\left({\xi}^i_k\right) \right|^2.$
\item $Y^i_{k}=Y^i_{k+1}+ \left(f\left(t_{k+1}, \xi^i_{k+1}, {\alpha}^i_{k+1}\right) - \left<{e}^i_{k+1}, \nabla_x  v_{k+1}\left( \xi^i_{k+1}\right)\right>\right) \delta t$, for all  $i \in [\![1,N]\!]$
\item $k-1\leftarrow k.$
\end{enumerate}
\textbf{end while}
\label{algo}
\end{algorithm}
\noindent In the above algorithm, the random variables  $(\varepsilon^i_k\,, \ k\in [\![0,n-1]\!]\,, i\in [\![1,N]\!])$ are i.i.d. according to $\mathcal{N}\left(0,I_d\right)$; 
$Proj_{S^+_d\left(\R\right)}: S_d\left(\R\right)\mapsto S^+_d\left(\R\right)$ denotes the Frobenius projection operator on the closed and convex space
 of semidefinite matrices; for each $p \in \N$, $P_p\left(\R^d\right)$ denotes the set of polynomial functions on $\R^d$ with degree $p$.
\begin{rem}
\label{rem:step5}
\begin{enumerate}
  \item Note that in Step 4, as soon as $Q_k\in S^+_d\left(\R\right)$ then $(Y_{k+1}^i)_{1\leq i\leq N}$ results from the update made at previous iteration at Step 8.  That updating rule corresponds to the \textit{multi-step forward dynamic programming} approach~\cite{gobet16} which is well-known for not inducing any additional bias error that would propagate backwardly during iterations. However, when $Q_k\notin S^+_d\left(\R\right)$, in Step 4, then we have to modify $\bar Q_{k+1}$, re-generate new variables $( \xi^i_{k+1})_{1\leq i\leq N}\ \textrm{i.i.d.}\ \sim\,\mathcal{N}(\bar m_{k+1},\bar Q_{k+1})$ and use the update $Y^i_{k+1}=v_{k+1}( \xi^i_{k+1})$ which adds a bias error.  
	Fortunately, in our numerical simulations it appeared easy to chose a first covariance matrix $\bar Q_n$ so that for all $k\in [\![0,n-1]\!]$ we had $Q_k\in S^+_d$. In that situation, the error propagation is only due to the sensitivity of the driver w.r.t. $\nabla_x v$ which is precisely minimized by our heuristics. 
\item The complexity of Algorithm~\ref{algo}, 
is  comparable to the  traditional Monte-Carlo Regression scheme using a \textit{\textbf{forward grid}}. Indeed, Algorithm~\ref{algo} requires an additional linear regression calculation of order $\mathcal{O}(d^2N)$ at Step 2 which is negligible w.r.t. 
the polynomial regression computations at Step 7 (operated by both algorithms) inducing  $\mathcal{O}(d^4N)$ operations in the specific case considered in simulations where the maximum degree of polynomials is $p=2$. When $Q_k\notin S^+_d$, Algorithm~\ref{algo} requires 
in addition, at Step 4, to implement: a Frobenius projection
 $Proj_{S^+_d\left(\R\right)}(Q_k)$ ($\mathcal{O}(d^3)$), $N$ multiplications of matrices $d\times d$ with vectors $d\times 1$ ($\mathcal{O}(d^2N)$); and  $N$ independent generations of $d$-dimensional Gaussian random variables. These additional operations
  induce a complexity of
 $\mathcal{O}(d^{2}N)$ which
does not increase the original 
 $\mathcal{O}(d^4N)$ complexity. 
\item In terms of memory, as already mentioned, we do not have to store
 the whole regression grid on the whole time horizon constituted of $ndN$
 reals  but only to consider $dN$ reals at each instant.
\end{enumerate}
\end{rem}
\begin{rem}
\noindent Suppose that at each time step $k \in [\![0,n-1]\!]$ the matrix $Q_k$ belongs to $S^+_d\left(\R\right)$. Then, Algorithm \ref{algo} is based on the representation formula appearing in Corollary~\ref{HJB_corro}, on the whole time interval $[0,T]$ with piecewise constant coefficients $a,c$ such that $a(t), c(t) = a_{k+1}, c_{k+1}$ for each $t \in ]t_k,t_{k+1}]$, for each $k \in [\![0,n-1]\!]$.
\end{rem}

\section{Stochastic control of thermostatically controlled loads} \label{Sexample}
\setcounter{equation}{0}

\subsection{Model description}
With the massive integration of variable renewable energies (like wind farms
 or solar panels) into power systems, balancing supply and demand in a real time basis requires to develop new leverages. 
A technical solution is to develop load control schemes in order to
 automatically adapt consumption to generation. 
In this section, we propose to apply Algorithm~\ref{algo} in order to control a large heterogeneous population of air-conditioners on a time horizon $[0,T]$
 such that the overall consumption of the population follows a given target profile,  while preserving the rooms temperatures within users comfort bounds. \\
We consider a hierarchical control scheme introduced in~\cite{callaway}, where the population is aggregated into $d$ clusters of $N^i$ homogeneous loads 
(with same air-conditioners and rooms characteristics) 
for i$\in [\![1,d]\!]$. 
For each cluster $i \in [\![1,d]\!]$, a \textit{local controller} decides at each time step to turn ON or OFF optimally some air-conditioners of cluster $i$, in order to satisfy a \textit{prescribed proportion} of devices with status ON in the cluster. The \textit{prescribed proportion} of devices ON in each cluster, at each time step, is computed by a \textit{central controller} controlling the average rooms temperatures in each cluster, $X^i := \frac{1}{N_i}\sum^{N_i}_{j=1}X^{i,j},$ where $X^{i,j}_t$ is the room temperature associated to load  $j \in [\![1,N_i]\!]$ of cluster $i\in [\![1,d]\!]$. $(X^{i,j}_t)_{0\leq t\leq T}$is supposed to follow the usual thermal dynamics (see \cite{grangereau, seguret} and references therein)
\begin{equation}\label{controlStateij}
X^{i,j}_t = x^{i,j}_0 + \int^{t}_{0}\big (-\theta^i(X^{i,j}_s-x^i_{\rm out})-\kappa^iP^i_{\max}\alpha^{i,j}_s\big )ds + \sigma^{i,j} W^{i,j}_t, \quad t \in [0,T],
\end{equation}
where for any $j \in [\![1,N_i]$, $\sigma^{i,j} > 0$, $\big (W^{i,j})$ are independent real Brownian motions representing model errors and temperature fluctuations inside the room due to local behavior (window, door opening etc.); $x^{i,j}_0$ is the initial temperature; $\kappa^i$ is the heat exchange parameter; $x^i_{out}$ denotes the outdoor air temperature; $1/\theta^i>0$ is the thermal time
constant; $P^i_{\max} > 0$ denotes the maximal power consumption; $\alpha^{i,j}_s\in\{0,1\}$ is the status OFF or ON of load $(i,j)$ at time instant $s\in[0,T]$. \\
We are interested in the problem of the \textit{central controller} who considers the aggregated state process 
 $X := (X^i)_{1\leq i\leq d},$ whose dynamics is obtained by averaging dynamics~\eqref{controlStateij} over $j\in [\![1,N_i]\!]$, for any $i\in [\![1,d]\!]$, 
\begin{equation}\label{controlState}
X^{i}_t = x^{i}_0 + \int^{t}_{0}\big (-\theta^i(X^{i}_s-x^i_{\rm out})-\kappa^iP^i_{\max}\alpha^{i}_s\big )ds + \sigma^i W^{i}_t, \quad t \in [0,T],
\end{equation}
where the control process $\big (\alpha_s=( \alpha^i_s)_{1\leq i\leq d}\,,s\in[0,T]\big )$ taking values in $[0,1]$  prescribes  the proportions of devices ON in each cluster; $x^i_0=\frac{1}{N_i}\sum_{j=1}^{N_i}x^{i,j}_0$; $(\sigma^i)^2=\frac{1}{N_i^2}\sum_{j=1}^{N_i}(\sigma^{i,j})^2$; $(W^i)_{1\leq i\leq d}$ is a $d$-dimensional Brownian motion.
The \textit{central controller} problem can be formulated as a specific instantiation of problem~\eqref{controlled_SDE}-\eqref{cost_function} with the following:
\begin{itemize} 
\item the controlled process $X$ driven by a drift coefficient $b:=(b^i)_{1\leq i\leq d}$ defined on $[0,T]\times\R^d\times[0,1]^d$  s.t. for any $i \in [\![1,d]\!]$ 
$b^i(t,x,a)= -\theta^i\left(x^i- x^i_{out}\right)- \kappa^i P^i_{\max} a^i,$ with the notation $a:=(a^i)_{1\leq i\leq d}$ and $x:=(x^i)_{1\leq i\leq d}$; 
\item the terminal cost $g(x) := \frac{1}{d}\sum_{i=1}^d|x^i-\bar x^i|^2$ where
$\bar x \in \R^d$ denotes given \textit{target} values for the final average temperatures of each cluster; 
\item the running cost defined on $[0,T]\times\R^d\times[0,1]^d$,
  $$f(t,x,a):=\lambda\left(\sum^{d}_{i=1}\rho^i  a^i- r_t\right)^2 +\frac{1}{d}\sum^{d}_{i=1}\Big (\gamma^i (\rho^i a^i)^2 + \eta^i (x^i - x^i_{\max})^2_+ + \eta^i (x^i_{\min} - x^i)^2_+\Big ), $$
where $\rho^i := \frac{N^iP^i_{\rm max}}{\sum_{j=1}^d N^jP^j_{\rm max}}$; 
$\sum_{i=1}^d \rho^ia^i$ gives the overall current consumption of the population as a proportion of the maximum consumption $\sum_{j=1}^d N^jP^j_{\rm max}$; 
 $r : [0,T] \mapsto \R^+_*$  denotes the target consumption profile for the overall consumption as a proportion of the maximum consumption $\sum_{j=1}^d N^jP^j_{\rm max}$;
$\lambda >0$ quantifies  the incentive for the overall consumption to track the target consumption profile $r$; $\gamma^i >0$ quantifies the quadratic penalty favoring smooth consumption profiles for cluster $i$; $\eta^i>0$ is a parameter penalizing excursions outside of the comfort interval  $[x^i_{\min}, x^i_{\max}]$ for cluster $i$ average temperature.
\end{itemize}
Note that $b$ verifies Assumption \ref{control_drift_ass}, $f$ verifies
Assumption \ref{control_costs_ass} and $g$ Assumption \ref{ass_g}.

\subsection{Simulation results}
\label{S62}

Consider the \textit{central controller} problem on a time horizon $T=3600s$,
  with a population of heterogeneous air-conditioners composed of $d=1,2,5,10,15,20$ clusters with $N^i=20$ identical loads in each cluster. We specify the chosen parameters. In each case, $\kappa=2.5^\circ$C/J and $\sigma^i=0.1^\circ$C$s^{\frac{1}{2}}$; $x_{\rm out}=27^\circ $C; $\theta^i[s^{-1}]$
 is chosen arbitrarily
 in  $[0.1, 0.97]$; $P^i_{max}[W]$ is chosen arbitrarily in $[0.5, 5]$; $x_0=\bar x[^\circ$C] is chosen arbitrarily in $[16, 27]$; $x_{\rm min}=\bar x-1.5^\circ $C; $x_{\rm max}=\bar x+1.5^\circ$C; $\eta=1(^\circ$C$)^{-2}$; $\lambda=20$; $\gamma^i$ is chosen arbitrarily
 in $[0.5,1.5]$. The target profile, $r$, used in simulations is obtained as the sum of a nominal profile corresponding to the standard (uncontrolled) behavior of air-conditioners and a deviation: $r= r^{\rm nom}+dev$.   
The standard dynamics of an (uncontrolled) air-conditioner is driven by a cycling rule of ON/OFF decisions 
intended to keep the room temperature in $[x^i_{\rm min},x^i_{\rm max}]$. When the air-conditioner is ON, it stays ON at $P^i_{\rm max}$ until the temperature reaches $x^i_{\rm min}$ then it switches OFF  until the temperature reaches $x^i_{\rm max}$. Then, the air-conditioner
turns ON again and begins a new cycle.
%
The nominal profile $r^{\rm nom}$
 has been generated by averaging the consumption of 1000 sets of $d$ clusters of $N^i$ heterogeneous air-conditioners  simulated independently according to~\eqref{controlStateij}, with $(\alpha^{i,j}_t)_{0\leq t\leq T}$ following the cycling rule of ON/OFF decisions and with independent initial conditions for temperature $x^{i,j}_0\sim \mathcal{N}(x_0^i,1)$ and   ON/OFF status $\alpha^{i,j}_0\in\{0,1\}$. 
The deviation profile $dev_t=\frac{20}{100}*\sin(\frac{2\pi t}{T})$ induces a maximal deviation of $20\%$ from the nominal profile and integrates to zero on the time horizon $[0,T]$ so that the target profile corresponds to the same energy consumed on the period $[0,T]$ as  the nominal profile. 

\noindent
The time step is  $\delta t =60s$. We have implemented Algorithm~1 with a \textbf{\textit{backward grid}} initiated with $\mathcal{N}(m_n=\bar x, Q_n=I_d)$. 
For comparison, we have also implemented the standard Monte-Carlo regression scheme using a \textbf{\textit{forward grid}} simulated according to~\eqref{controlState} with a deterministic control $\alpha_s$ approximating the nominal dynamics (according to the ON/OFF cycling rule) described previously. In both cases, we have used second order polynomials ($p=2$) as basis functions for regressions. We have considered $N=10^2,\,10^3,\, 5\times 10^3,\, 10^4,\, 2\times 10^4,\, 5\times 10^4,\, 10^5$  Monte-Carlo paths for the regression grids.
To evaluate the statistical performances of the \textbf{\textit{forward}} and \textbf{\textit{backward grids}}, we have implemented each algorithm independently  $N_{\rm grid} = 100$ 
 times for each value of $N$. For each run, $i=1,\cdots, N_{\rm grid}$, the value functions estimate $(v^i_k)_{0\leq k\leq n}$ 
(and the corresponding gradients) was used to implement the associated strategy $\alpha^i=(\alpha^i_k)_{0\leq k\leq n}$ on  $M=1000$
 i.i.d. simulations of the Brownian motion $W$, $\omega^1,\cdots ,\omega^j,\cdots ,\omega^M$. Then the resulting cost 
$\shj(\alpha^i,\omega^j):=g(X_T^{0,x_0,\alpha^i}(\omega^j))+\int_0^Tf(r,X^{0,x_0,\alpha^i}_r(\omega^j),\alpha_r)dr\ 
$ 
has been computed. The expected cost has been estimated as 
$
\E[\shj(\alpha^{i},\omega^j)] \approx 
\hat J:=\frac{1}{MN_{\rm grid}}\sum_{i=1}^{N_{\rm grid}}\sum_{j=1}^M \shj(\alpha^{i},\omega^j)\,.
$
The variance of $\hat J$ is estimated by $\hat \sigma^2_{\hat J}$ obtained by replacing, expectations and variances by their empirical approximation based on the sample, $\big (\shj(\alpha^i,\omega^j),\,, i\in [\![1,N_{\rm grid}]\!]\ j\in [\![1,M]\!]\big )$, in the  expression
$
 \hat \sigma^2_{\hat J}\approx Var(\hat J)=\frac{1}{MN_{\rm grid}}\E\left [Var \big (\shj(\alpha^i,\omega^j )\,\vert \alpha^i\big )\right ]+\frac{1}{N_{\rm grid}}Var \left (\E\big [ \shj(\alpha^i,\omega^j )\,\vert \alpha^i\big ]\right ),
$
for each $i$ and $j$.
We have reported on Table~\ref{tab:fwd} (resp. Table~\ref{tab:bwd}) 
the empirical mean $\hat J$ and within parenthesis the empirical
standard deviation $\hat\sigma_{\hat J}$ obtained for each considered pair $(d,N)$ for the \textit{\textbf{forward grid}} (resp. \textit{\textbf{backward grid}}).

\noindent One can observe that the \textit{\textbf{backward grid}} performs surprisingly well providing with high precision the lowest expected cost achieved by both methods (or almost) with only $N=5\times 10^3$ paths whatever the dimension $d$ of the control problem. 
This is consistent with our intuition based on the idea that localizing the grid around the optimally controlled process paths would bring efficiency and reduce the impact of dimension. 
The particularity of this problem is that the optimally controlled process is naturally localized in a small region of the state space because, on the one hand a target value, $\bar x$, is prescribed for the terminal temperatures (by the terminal cost) and on the other hand a target profile is assigned for the overall power consumption. The \textit{\textbf{backward grid}} has the advantage of being initiated around the target state and of following dynamics approaching the optimal strategy. This allows to concentrate the \textit{\textbf{backward grid}} in the small region of interest so that restricting the regression basis to polynomials of order $p=2$ seems already enough to obtain reasonable results.
However, one can observe some cases where the \textit{\textbf{forward grid}} 
 (for $N=10^5$ and $d\leq 5$)  has performed slightly better than the \textit{\textbf{backward grid}}. This can be interpreted by the fact that the \textit{\textbf{forward grid}} knows the initial condition $x_0$ while the \textit{\textbf{backward grid}} has no information about it. To further improve the performances Algorithm~\ref{algo}, an idea would be to find a way to exploit that information on the initial condition. This could constitute the subject of future research.  
\begin{table}[ht]
\centering
\begin{tabular}{ccccccc}
$\bf{N}$ & \bf{d=1}& \bf{d=2} & \bf{d=5}&\bf{d=10}&\bf{d=15}&\bf{d=20}\\
\hline
  $\bf{10^2}$  				&	8.68(0.98)&  17.28(1.01) &42.04(1.32) &  34.79(0.66)&   21.27(0.12)&   18.97(0.09)\\
    $\bf{10^3}$ 			& 7.61(6$e^{-4}$) &  8.24(0.07)&14.83(0.64) &  28.14(0.64)  & 37.91(0.60)  & 34.83(0.45)\\
    $\bf{5\times 10^3}$& 7.60(3$e^{-4}$) &  7.78(2$e^{-3}$)& 8.98(0.21) &  19.84(0.52) &  35.31(0.71) &  33.57(0.52)\\
    $\bf{10^4}$					&7.60(3$e^{-4}$) & 7.77(1$e^{-3}$)& 7.69(0.06) &  16.06(0.38) &  32.20(0.63) &  30.66(0.59)\\
    $\bf{2\times 10^4}$ & 7.60(3$e^{-4}$) & 7.77(2$e^{-4}$)& 7.37(0.02) &  13.58(0.40) &  28.97(0.71) &  28.17(0.67)\\
    $\bf{5\times 10^4}$ & 7.60(3$e^{-4}$)  &7.79(2$e^{-4}$)  &  7.28(2$e^{-3}$) &   7.96(0.25) &  26.69(0.65) &  26.21(0.69)\\
    $\bf{10^5}$					&7.61(3$e^{-4}$)		& 7.78(1$e^{-4}$)&7.27(8$e^{-4}$)&   6.12(0.08) &  22.54(0.56) &  23.26(0.59)
\end{tabular}
\caption{Mean, $\hat J$ (standard deviation, $\hat \sigma_{\hat J}$) of the simulated cost with the \textbf{\textit{forward grid}} strategy}
\label{tab:fwd}
\end{table}
\begin{table}[ht]
\centering 
\begin{tabular}{ccccccc}
$\bf{N}$ & \bf{d=1}& \bf{d=2} & \bf{d=5}&\bf{d=10}&\bf{d=15}&\bf{d=20}\\
\hline
  $\bf{10^2}$  						&	7.61(3$e^{-4}$)	&7.78(7$e^{-4}$) &7.41(6$e^{-3}$) 	&7.31(0.12) &28.14(0.18)&   26.01(0.12)\\
    $\bf{10^3}$ 					& 7.61(3$e^{-4}$) 	&7.77(2$e^{-4}$)	&7.39(1$e^{-3}$) &6.18(3$e^{-3}$) &  8.19(6$e^{-3}$)  & 7.87(1$e^{-2}$)  \\
    $\bf{5\times 10^3}$		& 7.61(3$e^{-4}$) 	&7.77(2$e^{-4}$)	&7.38(8$e^{-4}$)	& 6.17(1$e^{-3}$) & 8.15(2$e^{-3}$) &  7.74(3$e^{-3}$) \\
    $\bf{10^4}$						&7.61(3$e^{-4}$) 	&7.77(2$e^{-4}$)	&7.38(5$e^{-4}$)	&6.17(1$e^{-3}$) 	&8.15(2$e^{-3}$) &7.73(3$e^{-3}$)   \\
    $\bf{2\times 10^4}$ 	& 7.61(3$e^{-4}$) 	&7.77(2$e^{-4}$)	&7.38(3$e^{-4}$)	&6.17(8$e^{-4}$) 	&8.15(1$e^{-3}$) &7.73(2$e^{-3}$) \\
    $\bf{5\times 10^4}$ 	& 7.60(3$e^{-4}$)  &7.79(1$e^{-4}$) &7.38(2$e^{-4}$) & 6.16(5$e^{-4}$) 	&8.14(8$e^{-4}$) & 7.72(1$e^{-3}$) \\
    $\bf{10^5}$						&7.61(3$e^{-4}$)		&7.79(1$e^{-4}$) &7.39(2$e^{-4}$) &  6.16(4$e^{-4}$) &8.14(7$e^{-4}$) & 7.72(9$e^{-4}$) 
\end{tabular}
\caption{Mean $\hat J$ (standard deviation, $\hat \sigma_{\hat J}$) of the simulated cost with the \textbf{\textit{backward grid}} strategy}
\label{tab:bwd}
\end{table}

\section{Appendix} \label{appendix}

\subsection{A sufficient condition to obtain an equivalent probability}

\begin{lem} \label{Girsanov_OU}
\noindent We recall that
$\widetilde b$ was defined in \eqref{E42}. Let $W$ be an
$(\shf_t)_{t\in[0,T]}$-Brownian motion and $X$ be a solution of 
\begin{equation}\label{OUBis}
X_t = X_0 + \int^{t}_{0}\widetilde{b}\left(s,X_s\right)ds + \int^{t}_{0}\sigma\left(s\right)dW_s, \ t \in [0,T],
\end{equation}
\noindent where $X_0$ is a Gaussian random vector independent of $W$. Set $L_t := \sigma\left(t\right)^{-1}\widetilde{b}\left(t,X_t\right), t \in [0,T]$. Then, the Dol\'eans exponential $ \displaystyle \mathcal{E}\left(-\sum^{d}_{i=1}\int^{\cdot}_{0}L^i_sdW^i_s\right) := \exp\left(-\int^{\cdot}_{0}\sum^{d}_{i=1}L^i_sdW^i_s-\frac{1}{2}\int^{\cdot}_{0}\left|L_s\right|^2ds\right)$ is an $(\shf_t)_{t\in[0,T]}$-martingale.
\end{lem}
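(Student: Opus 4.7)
The natural tool here is Novikov's criterion, but applied on a sufficiently fine partition of $[0,T]$ rather than globally, since the integrand $|L_s|^2$ has quadratic growth in $X_s$ and there is no obvious global exponential moment. My plan is as follows.

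First, I would observe that since $a,c$ are bounded Borel functions and $\sigma$ is continuous with $\sigma(t)^{-1}$ continuous on $[0,T]$, there exists a constant $K>0$ such that
\begin{equation*}
|L_t|^2 = |\sigma(t)^{-1}(a(t)X_t + c(t))|^2 \le K(1+|X_t|^2), \quad t\in[0,T].
\end{equation*}
Next, because $X$ solves the linear SDE \eqref{OUBis} with Gaussian initial condition independent of $W$, the variation of constants formula (in terms of the fundamental matrix $\sha$ of \eqref{ODE_funda}) expresses $X_t$ as an affine function of $X_0$ plus a Wiener integral with deterministic kernel. Hence $X$ is a Gaussian process, and by Remark~\ref{RmCov} its mean $m(\cdot)$ and covariance $Q(\cdot)$ solve linear ODEs of type \eqref{ODE_m}--\eqref{ODE_Q}; in particular both are continuous on $[0,T]$, so $M:=\sup_{s\in[0,T]}\|Q(s)\|<\infty$ and $\sup_{s\in[0,T]}|m(s)|<\infty$.

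The classical computation of the Laplace transform of $|Z|^2$ for a Gaussian vector $Z\sim\mathcal{N}(\mu,\Sigma)$ gives $\mathbb{E}[\exp(\lambda|Z|^2)]<\infty$ as soon as $2\lambda\|\Sigma\|<1$, with an explicit bound depending only on $\lambda$, $\|\Sigma\|$ and $|\mu|$. Applying this pointwise in $s$, there exists $\lambda_0>0$ (depending only on $M$ and on $\sup_s|m(s)|$) such that
\begin{equation*}
C_0 := \sup_{s\in[0,T]} \mathbb{E}\bigl[\exp(\lambda_0 |X_s|^2)\bigr] < \infty.
\end{equation*}
I now choose $\delta>0$ so small that $\delta K \le \lambda_0$, and take a partition $0=t_0<t_1<\cdots<t_n=T$ with $t_{i+1}-t_i\le\delta$. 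For each $i$, Jensen's inequality applied to the convex function $x\mapsto \exp(\frac{\delta}{2}x)$ and the uniform probability on $[t_i,t_{i+1}]$ yields
\begin{equation*}
\exp\Bigl(\tfrac{1}{2}\int_{t_i}^{t_{i+1}}|L_s|^2\,ds\Bigr) \le \frac{1}{t_{i+1}-t_i}\int_{t_i}^{t_{i+1}}\exp\Bigl(\tfrac{\delta}{2}|L_s|^2\Bigr)\,ds.
\end{equation*}
Combining this with the bound $|L_s|^2\le K(1+|X_s|^2)$ and taking expectations via Tonelli gives
\begin{equation*}
\mathbb{E}\Bigl[\exp\Bigl(\tfrac{1}{2}\int_{t_i}^{t_{i+1}}|L_s|^2\,ds\Bigr)\Bigr] \le e^{\delta K/2}\cdot \sup_{s\in[0,T]} \mathbb{E}\bigl[\exp(\tfrac{\delta K}{2}|X_s|^2)\bigr] \le e^{\delta K/2}\,C_0 < \infty,
\end{equation*}
which is Novikov's condition on $[t_i,t_{i+1}]$.

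Finally, I would invoke the standard sub-interval criterion (see e.g. Corollary~3.5.14 in \cite{karatshreve}): if Novikov's condition holds on each piece $[t_i,t_{i+1}]$ of a finite partition of $[0,T]$, then the Dol\'eans exponential $\mathcal{E}(-\sum_i\int_0^{\cdot}L^i_s\,dW^i_s)$ is a true $(\shf_t)$-martingale on the whole interval $[0,T]$. I expect the only genuinely non-trivial step to be the uniform Gaussian exponential moment bound of Step~3 together with the choice of $\delta$; everything else is routine (Jensen, Tonelli, and the well-known piecewise Novikov argument).
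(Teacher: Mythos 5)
Your proposal is correct and follows essentially the same route as the paper's proof: piecewise Novikov on a fine partition (Corollary 5.14 / 3.5.14 in \cite{karatshreve}), Jensen plus Fubini--Tonelli to reduce to a pointwise exponential moment, the quadratic bound $|L_s|^2\le K(1+|X_s|^2)$ from boundedness of $a,c,\sigma^{-1}$, and the uniform Gaussian exponential moment of $X_s$ (bounded mean and covariance) with $\delta$ chosen small enough. No substantive difference worth noting.
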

\begin{proof}
\noindent Following Corollary 5.14 in \cite{karatshreve}, it is sufficient to find a constant time step subdivision $\left(t_n\right)_{n\in \N}$ of $[0,T]$ such that, for all $n \in \N$, 
\begin{equation*}
\mathbb{E}\left(\exp\left(\frac{1}{2}\int^{t_{n+1}}_{t_{n}}\left|L_s\right|^2ds\right)\right) < \infty.
\end{equation*}
\noindent Combining Jensen's inequality and Fubini's theorem, this is fulfilled
 in particular if for all $n \in \N$,
\begin{equation*}
\frac{1}{\delta}\int^{t_{n+1}}_{t_n}\mathbb{E}\left(\exp\left(\frac{\delta \left|L_s\right|^2}{2}\right)\right)ds < \infty,
\end{equation*}
\noindent where $\delta := t_{n+1}-t_n$. 
Let $s \in [0,T]$. Then,
\begin{equation*}
\left|L_s\right|^2 \leq 2 \delta \left|\left| \sigma^{-1} \right|\right|^2_{\infty}\left(\left|\left|a\right|\right|^2_{\infty}\left| X_{s}\right|^2 + \left|\left| c \right|\right|^2_{\infty}\right),\ \mathbb{P}-\rm{a.s},
\end{equation*}
\noindent since $a,c$ are bounded and $\sigma^{-1}$ is
also bounded being continuous on $[0,T]$. Furthermore, 
by item 1. of Lemma \ref{HP_lemma} and \eqref{ENormXi}, $X$ is a Gaussian process 
with mean function $m^X$ (resp. covariance function $Q^X$) solving the first line of equation \eqref{ODE_m} (resp. \eqref{ODE_Q}) with initial
 condition $\mathbb{E}\left(X_0\right)$ (resp. $\mathrm{Cov}\left(X_0\right)$). 
\smallbreak
\noindent Taking into account the fact that $m^X$ is bounded (since continuous), it suffices to find a subdivision such that
\begin{equation*}\label{expmoments}
\mathbb{E}\left(\exp\left(\frac{1}{2}K\delta\left|Z\right|^2\right)\right) < \infty,
\end{equation*}
\noindent where $Z \sim \mathcal{N}\left(0,I_d\right)$ and $K := 4\left|\left| \sigma^{-1} \right|\right|^2_{\infty}\left|\left|a\right|\right|^2_{\infty}\left|\left|Q^X\right|\right|_{\infty}> 0$. This is the case in particular if $K\delta < 1$, which ends the proof.
\end{proof}

\subsection{Proof of the local Lipschitz property of the cost functional $J$}
\begin{lem}\label{loc_Lip_J}
  \noindent Suppose the validity of Assumption \ref{control_drift_ass}.
  Suppose in addition that the functions $g$ and $x \mapsto f\left(t,x,\alpha\right), \left(t,\alpha\right) \in [0,T]\times\sha_0$ are locally Lipschitz with polynomial growth gradient (uniformly in $t$ and $\alpha$). Then, for each $\left(t,\alpha\right)\in[0,T]\times\sha_0$,
\begin{equation*}
x \mapsto J\left(t,x,\alpha\right)
\end{equation*} 
\noindent is locally Lipschitz, uniformly in $t$ and $\alpha$.
\end{lem}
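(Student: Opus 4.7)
The plan is to reduce the local Lipschitz property of $J$ to two standard pillars: (i) $L^p$-stability and $L^p$-moment estimates for the controlled SDE, both uniform in $(t,\alpha)$, and (ii) a mean-value argument exploiting the polynomial-growth gradients of $g$ and $f$.

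First I would establish, using Assumption \ref{control_drift_ass} and the fact that $\sigma$ depends only on time (so the noise term cancels in the difference $X^{t,x,\alpha}-X^{t,y,\alpha}$), the standard flow estimate
\begin{equation*}
\mathbb{E}\left(\sup_{s\in[t,T]} \vert X^{t,x,\alpha}_s - X^{t,y,\alpha}_s \vert^2 \right) \leq C\,\vert x-y\vert^2,
\end{equation*}
with $C$ depending only on $T$ and the Lipschitz constant of $b$, via Gr\"onwall's lemma applied to the squared difference. A parallel application of Gr\"onwall, using the linear growth of $b$ in $x$ uniformly in $(t,a)$, produces the moment bound
\begin{equation*}
\mathbb{E}\left(\sup_{s\in[t,T]} \vert X^{t,x,\alpha}_s\vert^{2p}\right) \leq C_p(1+\vert x\vert^{2p}), \quad p\geq 1,
\end{equation*}
again uniformly in $(t,\alpha)$.

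Next, I would exploit the local Lipschitz property of $g$ and of $x\mapsto f(t,x,\alpha)$. By assumption these have gradients with polynomial growth uniform in $(t,\alpha)$, so by the mean value theorem there exist $q\in\N$ and a constant $K>0$ such that, for $X,Y\in\R^d$,
\begin{equation*}
\vert g(X)-g(Y)\vert + \sup_{(t,a)}\vert f(t,X,a)-f(t,Y,a)\vert \leq K\,\vert X-Y\vert\bigl(1+\vert X\vert^q+\vert Y\vert^q\bigr).
\end{equation*}
Applying this with $X=X^{t,x,\alpha}_s$, $Y=X^{t,y,\alpha}_s$, invoking Cauchy–Schwarz to separate $\vert X-Y\vert$ from the polynomial weight, and then using the two estimates of the previous paragraph, I obtain
\begin{equation*}
\mathbb{E}\bigl(\vert g(X^{t,x,\alpha}_T)-g(X^{t,y,\alpha}_T)\vert\bigr) \leq C_R\,\vert x-y\vert, \qquad \vert x\vert,\vert y\vert\leq R,
\end{equation*}
and analogously for the integrand $f(r,X^{t,x,\alpha}_r,\alpha_r)$. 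Integrating in $r\in[t,T]$ by Fubini and summing the two contributions yields $\vert J(t,x,\alpha)-J(t,y,\alpha)\vert \leq C_R\vert x-y\vert$, with $C_R$ independent of $(t,\alpha)$, which is the claim.

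The only non-routine point is keeping track of the uniformity in $(t,\alpha)$ when invoking Gr\"onwall and the polynomial weights; this is ensured because the Lipschitz/linear-growth constants of $b$, the polynomial-growth constants of $\nabla g,\nabla_x f$, and the constants in the SDE moment estimates are all independent of $(t,\alpha)$ thanks to Assumptions \ref{control_drift_ass} and the local-Lipschitz hypothesis on $g,f$. No further obstacle is expected.
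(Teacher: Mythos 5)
Your proposal is correct and takes essentially the same route as the paper's proof: a mean-value/gradient bound for $g$ (and $f$) with a polynomial-growth weight, a Gr\"onwall stability estimate for the controlled flow (exploiting that the additive noise cancels in the difference), and uniform moment estimates for $\sup_{s\in[t,T]}|X^{t,z,\alpha}_s|$ to control that weight, all with constants independent of $(t,\alpha)$. The only cosmetic difference is that you use an $L^2$-stability bound plus Cauchy--Schwarz, whereas the paper uses the pathwise estimate $|X^{t,x,\alpha}_T-X^{t,y,\alpha}_T|\le e^{KT}|x-y|$ almost surely; the two are interchangeable here.
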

\begin{proof}
\noindent We give here a proof of the local Lipschitz property for the term involving the function $g$ since the other term can be treated in the same way.
\smallbreak
\noindent Let $\left(t,\alpha\right) \in [0,T]\times\sha_0$ and $x,y$ in a compact set of $\R^d$. Let $K$ be the Lipschitz constant of $b$. Using in particular the Cauchy-Schwarz inequality, we get
\begin{align} \label{interm-ineg}
\left|\mathbb{E}\left(g\left(X^{t,x,\alpha}_T\right)\right) - \mathbb{E}\left(g\left(X^{t,y,\alpha}_T\right)\right)\right| &{} \leq \int^{1}_{0}\mathbb{E}\left(\left|\nabla_xg\left(aX^{t,x,\alpha}_T + \left(1-a\right)X^{t,y,\alpha}_T\right)\right|\left|X^{t,x,\alpha}_T - X^{t,y,\alpha}_T\right|\right)da \nonumber \\
&{} \leq e^{KT}\int^{1}_{0}\mathbb{E}\left(\left|\nabla_xg\left(aX^{t,x,\alpha}_T + \left(1-a\right)X^{t,y,\alpha}_T\right)\right|\right)da\left|x-y\right|
\end{align}
\noindent where we have used the estimate $\left|X^{t,x,\alpha}_T - X^{t,y,\alpha}_T\right| \leq e^{KT}\left|x-y\right|$, following from the identity
\begin{equation*}
\left|X^{t,x,\alpha}_r - X^{t,y,\alpha}_r\right| \leq \left|x-y\right| + K\int^{r}_{t}\left|X^{t,x,\alpha}_s - X^{t,y,\alpha}_s\right|ds, \ r \in [t,T],
\end{equation*}
\noindent together with Gronwall's lemma. In view of \eqref{interm-ineg}, the point is proved if $$\int^{1}_{0}\mathbb{E}\left(\left|\nabla_xg\left(aX^{t,x,\alpha}_T + \left(1-a\right)X^{t,y,\alpha}_T\right)\right|\right)da$$ is bounded uniformly in $t,x,y,\alpha$. This follows from polynomial growth of $\nabla_x g$,
classical moment estimates for $\sup_{s\in[t,T]}\left|X^{t,z,\alpha}_s\right|, \ z \in \R^d$ (see for example Corollary 2.5.12 in \cite{krylov}) and the fact $x,y$ lie in a compact set. 
\end{proof}

\subsection{A simplified version of the envelope theorem}

\begin{lem} \label{env_thm}
\noindent Let $\Lambda$ be an arbitrary set and $O$ be an open subset of $\R^d$. Let $x \in \R^d$. Let $F : O\times\Lambda \mapsto \R$ such that for all $\lambda \in \Lambda$, $F\left(\cdot,\lambda\right)$ and $V : x \mapsto \sup_{\lambda \in \Lambda} F\left(x,\lambda\right)$ are differentiable at the point $x$. Suppose also that $\Lambda^*\left(x\right) = \left\{\lambda \in \Lambda, V\left(x\right) = F\left(x,\lambda\right)\right\}$ is not empty. Then, 
\begin{equation*}
\nabla_x V\left(x\right) = \nabla_x F\left(x,\lambda^*_x\right),
\end{equation*}
\noindent for every $\lambda^*_x \in \Lambda^*\left(x\right)$.
\end{lem}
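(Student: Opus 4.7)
The plan is to reduce the identity to a first-order optimality condition for a non-negative function that attains its minimum at $x$. Fix any $\lambda^*_x \in \Lambda^*(x)$ and introduce the auxiliary function
\begin{equation*}
G : O \to \R, \quad G(y) := V(y) - F(y,\lambda^*_x).
\end{equation*}
By definition of $V$ as a supremum, one has $G(y) \geq 0$ for every $y \in O$, and by the very definition of $\Lambda^*(x)$ we have $G(x) = V(x) - F(x,\lambda^*_x) = 0$. Hence $x$ is a global minimum of $G$ on $O$.

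Next I would invoke differentiability. By hypothesis, $V$ is differentiable at $x$ and $F(\cdot,\lambda^*_x)$ is differentiable at $x$, so $G$ is differentiable at $x$ with $\nabla G(x) = \nabla_x V(x) - \nabla_x F(x,\lambda^*_x)$. Since $O$ is open and $x$ is an interior minimizer of $G$, Fermat's first-order condition yields $\nabla G(x) = 0$, which is exactly the claimed identity
\begin{equation*}
\nabla_x V(x) = \nabla_x F(x,\lambda^*_x).
\end{equation*}
Since $\lambda^*_x$ was an arbitrary element of $\Lambda^*(x)$, the result holds for every such $\lambda^*_x$.

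There is essentially no technical obstacle here: the argument avoids any assumption on the regularity of $F$ in the variable $\lambda$ or on the structure of $\Lambda$, because differentiability of the sup $V$ is assumed a priori, and only the pointwise inequality $V \geq F(\cdot,\lambda^*_x)$ together with equality at $x$ is needed. The only point worth keeping an eye on in a final write-up is the implicit assumption that $x$ lies in $O$ (so that Fermat's condition applies), which follows from reading the statement as ``$x \in O$''.
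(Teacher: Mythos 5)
Your proof is correct and is essentially the paper's argument: both rest solely on the inequality $V \geq F(\cdot,\lambda^*_x)$ with equality at $x$, combined with differentiability of both functions at $x$. The only difference is cosmetic -- the paper carries out the two-sided first-order expansion in $h$ explicitly, whereas you package the same computation as Fermat's condition applied to $G = V - F(\cdot,\lambda^*_x)$ at its interior minimizer $x$ (and your remark that the statement implicitly requires $x \in O$ is right).
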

\begin{proof}
\noindent Let $x$ as in the proposition statement and $h \in \R^d$. Let $\lambda^*_x \in \Lambda^*\left(x\right)$. Then, using in particular the differentiability of $F\left(\cdot,\lambda^*_x\right)$ at the point $x$, we get 
\begin{align} \label{ineg_1}
V\left(x+h\right) - V\left(x\right) &{}\geq F\left(x+h,\lambda^*_x\right) - F\left(x,\lambda^*_x\right) \nonumber \\
&{}= \left<\nabla_x F\left(x,\lambda^*_x\right),h\right> + o_0(\left|h\right|). 
\end{align}
\noindent By the differentiability of $V$ at the point $x$, \eqref{ineg_1} implies 
\begin{equation} \label{o_ineg_1}
\left<\nabla_x V\left(x\right) - \nabla_x F\left(x,\lambda^*_x\right),h\right> \geq o_0\left(\left|h\right|\right).
\end{equation}
\noindent Setting $h$ to $-h$ in \eqref{ineg_1} and proceeding as before, we obtain 
\begin{equation} \label{o_ineg_2}
\left<\nabla_x V\left(x\right) - \nabla_x F\left(x,\lambda^*_x\right),h\right> \leq o_0\left(\left|h\right|\right).
\end{equation}
\noindent Combining \eqref{o_ineg_1} and \eqref{o_ineg_2}, we get 
\begin{equation*}
\left<\nabla_x V\left(x\right) - \nabla_x F\left(x,\lambda^*_x\right),\frac{h}{\left|h\right|}\right> \underset{h\to 0}{\longrightarrow} 0,
\end{equation*}
\noindent which forces $\nabla_x V\left(x\right) = \nabla_x F\left(x,\lambda^*_x\right)$. This ends the proof. 
\smallbreak
\end{proof}

\section*{Acknowledgments}

The work was supported by a public grant as part of the
{\it Investissement d'avenir project, reference ANR-11-LABX-0056-LMH,
  LabEx LMH,}
in a joint call with Gaspard Monge Program for optimization, operations research and their interactions with data sciences.

\bibliographystyle{plain}
\bibliography{../../../../BIBLIO_FILE/ThesisLucas}





\end{document}